\definecolor{lightgray}{gray}{0.5}
\definecolor{darkgreen}{rgb}{0,0.6,0.13}
\newcommand{\nc}{\newcommand}
\nc{\dsp}{\displaystyle}
\nc{\txt}{\textstyle}
\nc{\reff}[1]{(\ref{#1})}
\nc{\mrm}[1]{\mathrm{#1}}
\nc{\udl}[1]{\underline{#1}}
\nc{\ovl}[1]{\overline{#1}}
\nc{\al}{\underline{\boldsymbol{\alpha}}}
\nc{\la}{\underline{\boldsymbol{\lambda}}}
\nc{\llbr}{\llbracket}
\nc{\rrbr}{\rrbracket}
\nc{\lbr}{\lbrack}
\nc{\rbr}{\rbrack}
\nc{\N}{\mathbb{N}}
\nc{\Z}{\mathbb{Z}}
\nc{\D}{\mathbb{D}}
\nc{\Q}{\mathbb{Q}}
\nc{\R}{\mathbb{R}}
\nc{\C}{\mathbb{C}}
\nc{\T}{\mathbb{T}}
\nc{\Abf}{\mathbf{A}}
\nc{\Bbf}{\mathbf{B}}
\nc{\Cbf}{\mathbf{C}}
\nc{\Ibf}{\mathbf{I}}
\nc{\Lbf}{\mathbf{L}}
\nc{\Nbf}{\mathbf{N}}
\nc{\Sbf}{\mathbf{S}}
\nc{\Mbf}{\mathbf{M}}
\nc{\Tbf}{\mathbf{T}}
\nc{\Dbf}{\mathbf{D}}
\nc{\Qbf}{\mathbf{Q}}
\nc{\Pbf}{\mathbf{P}}
\nc{\Stwo}{\mathbb{S}^2}
\nc{\tld}[1]{\tilde{#1}}
\nc{\wtld}[1]{\widetilde{#1}}
\nc{\hu}{\hat{u}}
\nc{\wh}[1]{\widehat{#1}}
\nc{\ph}{\varphi}
\nc{\sumeven}{\sum_{k=-N/2}^{N/2}{\hspace{-0.3cm}}'{\;\,}}
\nc{\sumodd}{\sum_{k=-\frac{N-1}{2}}^{\frac{N-1}{2}}}
\nc{\sumoddl}{\sum_{l=-\frac{N-1}{2}}^{\frac{N-1}{2}}}
\nc{\cqfd}{~\hbox{\vrule width 2.5pt depth 2.5 pt height 3.5 pt}}
\nc{\ra}[1]{}
\nc{\nf}{\normalfont}
\newcommand{\ignore}[1]{}
\title{Fourth-order time-stepping for stiff PDEs on the sphere}
\author{Hadrien Montanelli\thanks{Oxford University Mathematical Institute, Oxford OX2 6GG, UK.\ \ Supported by 
the European Research Council under the European Union's Seventh Framework Programme (FP7/2007--2013)/ERC grant agreement 
no.\ 291068.\ \ The views expressed in this article are not those of the ERC or the European Commission, and the European Union is not 
liable for any use that may be made of the information contained here.} 
\and Yuji Nakatsukasa\thanks{Oxford University Mathematical Institute, Oxford OX2 6GG, UK.\ \ Supported by JSPS as an Overseas Research Fellow.}}
\begin{document}
\setlength{\parskip}{0pt}

\maketitle

\begin{abstract}
We present in this paper algorithms for solving stiff PDEs on the unit sphere with spectral accuracy in space
and fourth-order accuracy in time.
These are based on a variant of the double Fourier sphere method in coefficient space with multiplication matrices that differ from the usual ones, 
and implicit-explicit time-stepping schemes.
Operating in coefficient space with these new matrices allows one to use a sparse direct solver, avoids the coordinate singularity and maintains smoothness 
at the poles, while implicit-explicit schemes circumvent severe restrictions on the time-steps due to stiffness.
A comparison is made against exponential integrators and it is found that implicit-explicit schemes perform best.
Implementations in MATLAB and Chebfun make it possible to compute the solution of many PDEs to high accuracy in a very convenient fashion.
\end{abstract}

\begin{keywords}
Stiff PDEs, exponential integrators, implicit-explicit, PDEs on the sphere, double Fourier sphere method, Chebfun
\end{keywords}

\begin{AMS}
65L04, 65L05, 65M20, 65M70, 65T40 
\end{AMS}

\pagestyle{myheadings}
\thispagestyle{plain}

\markboth{MONTANELLI AND NAKATSUKASA}{STIFF PDES ON THE SPHERE}

\section{Introduction}

We are interested in computing smooth solutions of stiff PDEs on the unit sphere of the form 
\begin{equation}
u_t = \mathcal{L}u + \mathcal{N}(u), \quad u(t=0,x,y,z)=u_0(x,y,z),
\label{PDE}
\end{equation}

\noindent where $u(t,x,y,z)$ is a function of time $t$ and Cartesian coordinates $(x,y,z)$ with $x^2 + y^2 + z^2=1$.
The function $u$ can be real or complex and \reff{PDE} can be a single equation, as well as a system of equations.
In this paper, we restrict our attention to $\mathcal{L} u = \alpha\Delta u$ and to a nonlinear non-differential operator $\mathcal{N}$ with constant coefficients, 
but the techniques we present can be applied to more general cases.
A large number of PDEs of interest in science and engineering take this form.
Examples on the sphere include the (diffusive) Allen--Cahn equation $u_t = \epsilon\Delta u + u - u^3$ with $\epsilon\ll1$~\cite{du2008}, the (dispersive) focusing nonlinear Schr\"{o}dinger equation $u_t=i\Delta u + iu|u|^2$~\cite{takaoka2016}, the Gierer--Meinhardt~\cite{bhattacharya2005}, Ginzburg--Landau~\cite{rubinstein1995} and Brusselator~\cite{trinh2016} equations, and many others.

There are several methods to discretize the spatial part of \reff{PDE} with spectral accuracy,
including spherical harmonics~\cite{atkinson2012}, radial basis functions (RBFs)~\cite{fornberg2015a} and the double Fourier sphere (DFS) method~\cite{merilees1973, orszag1974}.
The DFS method is the only one that leads to a $\mathcal{O}(N\log N)$ complexity per time-step, where $N$ is the total number of grid points in the spatial discretization of \reff{PDE}.
For spherical harmonics, the cost per time-step is $\mathcal{O}(N^{3/2})$ since there are no effective ``fast'' spherical transforms,\footnote{Fast $\mathcal{O}(N\log N)$ spherical transforms have received significant attention but require so far a $\mathcal{O}(N^2)$ precomputational cost~\cite{rokhlin2006, tygert2008, tygert2010}. Note that in a recent manuscript~\cite{slevinsky2017} Slevinsky proposed a new fast spherical transform based on conversions between spherical harmonics and bivariate Fourier series with a lower $\mathcal{O}(N^{3/2})$ precomputational cost. For implicit-explicit schemes with the DFS method, the precomputation is $\mathcal{O}(N)$.} and for global RBFs~\cite[Ch.~6]{fornberg2015a}, the cost per time-step is $\mathcal{O}(N^2)$ since these generate dense differentiation matrices.\footnote{RBF-FD~\cite[Ch.~7]{fornberg2015a} generate sparse matrices but only achieve algebraic orders of accuracy. Moreover, the solution time for these sparse matrices is not necessarily $\mathcal{O}(N)$.}
We focus in this paper on the DFS method and present a novel formulation operating in coefficient space.

Once the spatial part of \reff{PDE} has been discretized by the DFS method on an $n\times m$ uniform longitude-latitude grid, it becomes a system of $nm$ ODEs,
\begin{equation} 
\hat{u}' = \mathbf{L}\hat{u} + \mathbf{N}(\hat{u}), \quad \hat{u}(0)=\hat{u}_0,
\label{ODE}
\end{equation}

\noindent where $\hat{u}(t)$ is a vector of $nm$ Fourier coefficients and $\Lbf$ (an $nm\times nm$ matrix) and $\Nbf$ are the discretized versions of $\mathcal{L}$ and $\mathcal{N}$ in Fourier space. Solving the system \reff{ODE} with generic explicit time-stepping schemes can be highly challenging because of \textit{stiffness}:
the large eigenvalues of $\Lbf$---due to the second differentiation order in \reff{PDE} and the clustering of points near the poles---force one to use very small time-steps. 
Exponential integrators and implicit-explicit (IMEX) schemes are two classes of numerical methods that are aimed at treating stiffness.
For exponential integrators, the linear part~$\Lbf$ is integrated exactly using the matrix exponential while a numerical scheme is applied to the nonlinear part~$\Nbf$.
For IMEX schemes, an explicit formula is used to advance~$\Nbf$ while an implicit scheme is used to advance~$\Lbf$.
We show in this paper that the DFS method combined with IMEX schemes leads to $\mathcal{O}(nm\log nm)$ per time-step algorithms for both diffusive and dispersive PDEs, that exponential integrators achieve this complexity for diffusive PDEs only, and that IMEX schemes outperform exponential integrators in both cases.
For numerical comparisons, we consider two versions of the fourth-order ETDRK4 exponential integrator of Cox and Matthews~\cite{cox2002} and two fourth-order IMEX schemes, the IMEX-BDF4~\cite{hundsdorfer2007} and LIRK4~\cite{calvo2001} schemes.

\begin{figure}
\hspace{-1.2cm}
\includegraphics[scale=.6]{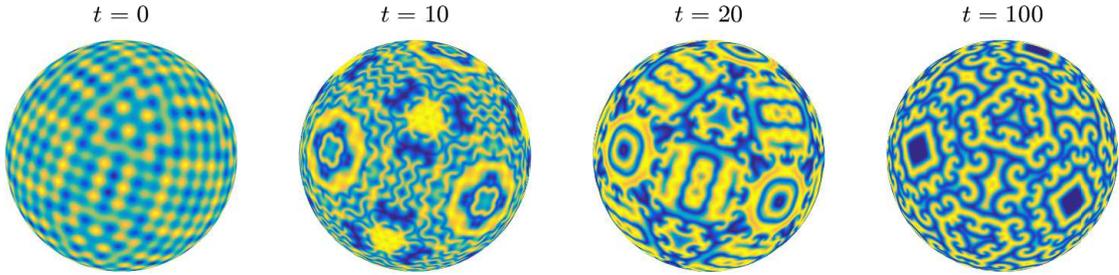}
\vspace{-10cm}
\caption{\textit{Initial condition and real part of the solution at times $t=10,20,100$ of the Ginzburg--Landau equation computed by the \textup{\texttt{spinsphere}} code. This solution is oriented in a direction at a $\pi/8$ angle from the north-south axis, so the symmetry maintained in the computations is a reflection of global accuracy.}}
\label{fig:GLsol}
\end{figure}

By contrast, Kassam and Trefethen demonstrated in~\cite{kassam2005} that exponential integrators (ETDRK4) outperform IMEX schemes (IMEX-BDF4); this can be explained by two factors.
First, they focused on problems with diagonal matrices $\Lbf$. 
(IMEX-BDF4 performed better than ETDRK4 for the only non-diagonal problem they considered.) 
For diagonal problems, exponential integrators are particularly efficient since the computation of the matrix exponential is trivial and
the matrix exponential is diagonal too (hence, its action on vectors can trivially be computed in linear time).
Second, IMEX-BDF4 is unstable for dispersive PDEs since it is based on the fourth-order backward differentiation formula, which is unstable for dispersive PDEs---this is why they could not make it work for the KdV equation. 
Our DFS method in coefficient space leads to matrices that are not diagonal but have a sparsity structure that makes IMEX schemes particularly efficient,
and we consider not only IMEX-BDF4 but also LIRK4, which is stable for dispersive PDEs.

There are libraries for solving time-dependent PDEs on the sphere, including SPHEREPACK~\cite{spherepack1999} and FEniCS~\cite{fenics2013}. 
However, none of these are aimed at solving stiff PDEs and easily allow for computing in an integrated environment.
The algorithms we shall describe in this paper are aimed at solving stiff PDEs, and have been implemented in MATLAB and made available as part of Chebfun~\cite{chebfun} in the \texttt{spinsphere} code.
(Note that \texttt{spin} stands for \textbf{s}tiff \textbf{P}DE \textbf{in}tegrator.)
The recent extension of Chebfun to the sphere~\cite{townsend2016}, built on its extension to periodic problems~\cite{montanelli2015b},
provides a very convenient framework for working with functions on the sphere. 
For example, the function
\begin{equation}
f(\lambda, \theta) = \cos(1+ \cos\lambda\sin(2\theta))
\end{equation}

\noindent can be approximated to machine precision by the following command:

\vspace{.2cm}
\begin{small}
\begin{verbatim}
     f = spherefun(@(lam,th) cos(1 + cos(lam).*sin(2*th)));
\end{verbatim}
\end{small}
\vspace{.2cm}

\noindent Using \texttt{spherefun} objects as initial conditions, the \texttt{spinsphere} code allows one to solve stiff PDEs with a few lines of code, using IMEX-BDF4 for diffusive and LIRK4 for dispersive PDEs. 
For example, the following MATLAB code solves the Ginzburg--Landau equation $u_t = 10^{-4}\Delta u + u -(1+1.5i)u|u|^2$ with $1024$ grid points in longitude and latitude and a time-step $h=10^{-1}$:

\vspace{.2cm}
\begin{small}
\begin{verbatim}  
     n = 1024;                                          % number of grid pts
     h = 1e-1;                                          % time-step
     tspan = [0 100];                                   % time interval
     S = spinopsphere(tspan);                           % initialize operator
     S.lin = @(u) 1e-4*lap(u);                          % linear part
     S.nonlin = @(u) u-(1+1.5i)*u.*abs(u).^2;           % nonlinear part  
     u0 = @(x,y,z) 1/3*(cos(40*x)+cos(40*y)+cos(40*z)); % initial cond.
     th = pi/8; c = cos(th); s = sin(th);               % pi/8 rotation
     S.init = spherefun(@(x,y,z)u0(c*x-s*z,y,s*x+c*z)); % rotated initial cond.
     u = spinsphere(S, n, h);                           % solve
\end{verbatim}
\end{small}
\vspace{.2cm}

\noindent The initial condition and the solution at times $t=10,20,100$ are shown in Figure~\ref{fig:GLsol}. 
(The Ginzburg--Landau equation in 2D goes back to the 1970s with the work of Stewartson and Stuart~\cite{stewartson1971}.
Rubinstein and Sternberg studied it on the sphere~\cite{rubinstein1995}, with applications in the study of liquid crystals~\cite{pismen1992} and nonequilibrium patterns~\cite{pomeau1983}.)
Figure~\ref{code:LIRK4} lists a detailed MATLAB code to solve the same problem; a more sophisticated version of this code is used inside \texttt{spinsphere}.
(Note that this code might be a little bit slow; for speed, the reader might want to adjust $n$ and $h$, and change the initial condition.
We also encourage the reader to type \texttt{spinsphere('gl')} or \texttt{spinsphere('nls')} to invoke an example computation.)

The paper is structured as follows. 
In the next section, we review the DFS method (Section 2.1) and present a new Fourier spectral method in coefficient space, which, 
using multiplication matrices that differ from the usual ones, avoids the coordinate singularity (Section 2.2), 
takes advantage of sparse direct solvers (Section 2.3), and maintains smoothness at the poles (Sections 2.4 and 2.5).
The time-stepping schemes are presented in Section 3 while Section 4 is dedicated to numerical comparisons on simple PDEs.

\section{A Fourier spectral method in coefficient space}

We present in this section a Fourier spectral method for the spatial discretization of \reff{PDE}, based on 
the DFS method and novel Fourier multiplication matrices in coefficient space.
The accuracy of the method is tested by solving the Poisson and heat equations.

\subsection{The double Fourier sphere method}

The DFS method uses the longitude-latitude coordinate transform,
\begin{equation}
x = \cos\lambda\sin\theta, \; y = \sin\lambda\sin\theta, \; z=\cos\theta, 
\label{coord}
\end{equation}

\noindent with $(\lambda,\theta)\in[-\pi,\pi]\times[0,\pi]$. The azimuth angle $\lambda$ corresponds to the longitude while
the polar (or zenith) angle $\theta$ corresponds to the latitude.\footnote{To be precise, $\theta$ is the colatitude, defined as ``$\pi/2$ minus latitude'' with latitude in $[-\pi/2,\pi/2]$. For simplicity, we will refer to it as latitude. Note that $\theta=0$ corresponds to the north
pole and $\theta=\pi$ to the south pole.}
A function $u(x,y,z)$ on the sphere is written as $u(\lambda,\theta)$ using \reff{coord}, i.e., 
\begin{equation}
u(\lambda, \theta) = u(\cos\lambda\sin\theta, \sin\lambda\sin\theta, \cos\theta), \quad (\lambda,\theta)\in[-\pi,\pi]\times[0,\pi],
\label{defu}
\end{equation}

\noindent and \reff{PDE} with $\mathcal{L}=\alpha\Delta$ becomes
\begin{equation}
u_t = \alpha\Delta u + \mathcal{N}(u), \quad u(t=0,\lambda,\theta)=u_0(\lambda,\theta), \quad  (\lambda,\theta)\in[-\pi,\pi]\times[0,\pi].
\label{PDE2}
\end{equation}

\noindent Note that the function $u(\lambda,\theta)$ in \reff{defu} is $2\pi$-periodic in $\lambda$ but not periodic in $\theta$.
The key idea of the DFS method---developed by Merilees~\cite{merilees1973} and further studied by Orszag~\cite{orszag1974} in the 1970's, and recently revisited by Townsend et al.\ with the use of low-rank approximations~\cite{townsend2016}---is to associate a function $\tilde{u}(\lambda,\theta)$ with $u(\lambda,\theta)$, $2\pi$-periodic in both $\lambda$ and $\theta$, defined on $[-\pi,\pi]\times[-\pi,\pi]$, and constant along the lines $\theta=0$ and $\theta=\pm\pi$ corresponding to the poles. Mathematically, the function $\tilde{u}(\lambda,\theta)$ is defined as
\begin{equation}
\tilde{u}(\lambda, \theta) = \left\{
\begin{array}{ll}
u(\lambda, \theta), & (\lambda,\theta)\in[-\pi,\pi]\times[0,\pi], \\
u(\lambda+\pi, -\theta), & (\lambda,\theta)\in[-\pi,0]\times[-\pi,0], \\
u(\lambda-\pi, -\theta), & (\lambda,\theta)\in[0,\pi]\times[-\pi,0].
\end{array}
\right.
\label{DFS}
\end{equation}

\noindent The function $u$ is ``doubled-up'' in the $\theta$-direction and flipped; see, e.g., \cite[Fig.~1]{townsend2016}.
Since the function $\tilde{u}$ is $2\pi$-periodic in both $\lambda$ and $\theta$, it can be approximated by a 2D Fourier series,
\begin{equation}
\tilde{u}(\lambda, \theta) \approx \sum_{j=-m/2}^{m/2}{\hspace{-0.3cm}}'{\;\,}\sum_{k=-n/2}^{n/2}{\hspace{-0.3cm}}'{\;\,}\hat{u}_{jk}e^{ij\theta}e^{ik\lambda}.
\label{Fourier2D}
\end{equation}

\noindent The numbers $n$ and $m$ are assumed to be even (this will be the case throughout this paper) and the primes on the summation signs mean that the boundary terms $j=\pm m/2$ or $k=\pm n/2$ are halved.
The Fourier coefficients are defined by
\begin{equation}
\hat{u}_{jk} = \frac{1}{nm}\sum_{p=1}^{m}\sum_{q=1}^{n}\tilde{u}(\lambda_q,\theta_p)e^{-ij\theta_p}e^{-ik\lambda_q}, \quad -\frac{m}{2}\leq j \leq\frac{m}{2}-1,
\quad -\frac{n}{2}\leq k \leq\frac{n}{2}-1,
\label{Coeffs2D}
\end{equation}

\noindent with $\hat{u}_{j,n/2}=\hat{u}_{j,-n/2}$ for all $j$ and $\hat{u}_{m/2,k}=\hat{u}_{-m/2,k}$ for all $k$, and correspond to a 2D uniform grid with $n$ points in longitude and $m$ points in latitude,
\begin{equation}
\lambda_q = -\pi + (q-1)\frac{2\pi}{n}, \quad 1\leq q\leq n, \quad \theta_p = -\pi + (p-1)\frac{2\pi}{m}, \quad 1\leq p\leq m.
\label{Grid2D}
\end{equation}

\noindent The $nm$ Fourier coefficients $\hat{u}_{jk}$ can be computed by sampling $\tilde{u}$ on the grid and using the 2D FFT, costing $\mathcal{O}(nm\log nm)$ operations.
In practice we take $m=n$ since it leads to the same resolution in each direction around the equator where the spacing is the coarsest.

As mentioned in~\cite{townsend2016}, every smooth function $u(\lambda,\theta)$ on the sphere is associated with a smooth bi-periodic function $\tilde{u}(\lambda,\theta)$ on $[-\pi,\pi]^2$ via~\reff{DFS},
but the converse is not true since smooth bi-periodic functions might not be constant along the lines $\theta=0$ and $\theta=\pm\pi$ corresponding to the poles.
To be smooth on the sphere, functions of the form~\reff{DFS} have to satisfy the \textit{pole conditions}, which ensures that $\tilde{u}(\lambda,\theta)$ is single-valued at the poles despite the fact that latitude circles degenerate into a single point there. For approximations of the form~\reff{Fourier2D}--\reff{Coeffs2D}, this is given by
\begin{equation}
\sum_{j=-m/2}^{m/2}{\hspace{-0.3cm}}'{\;\,}\hat{u}_{jk} = \sum_{j=-m/2}^{m/2}{\hspace{-0.3cm}}'{\;\,}(-1)^j\hat{u}_{jk} = 0, \quad |k|\geq1.
\label{polecondition1}
\end{equation}

\noindent As we will see in the numerical experiments of Sections~2.4 and 2.5, when solving a PDE involving the Laplacian operator, if the right-hand side (for Poisson's equation) or the initial condition (for the heat equation) is a smooth function on the sphere, then the solutions obtained with our DFS method are also smooth functions on the sphere. 
Therefore, we do not have to impose the conditions~\reff{polecondition1}.
Similarly, we do not impose  the ``doubled-up'' symmetry in~\reff{DFS}, as it was preserved throughout our experiments (we leave as an open problem to prove this). 
For brevity we only illustrate the condition~\reff{polecondition1} in our experiments. 

Let us finish this section with some comments about Fourier series for solving PDEs on the sphere.
One way of using them is to use standard double Fourier series on a ``doubled-up'' version of $u$, i.e., the DFS method~\reff{DFS}--\reff{Coeffs2D}.
This is what Merilees did and, combined with a Fourier spectral method in value space, he solved the shallow water equations~\cite{merilees1973}.
Another way is to use half-range cosine or sine series~\cite{boyd1978, cheong2000a, orszag1974, shen1999, yee1980}---after all, spherical harmonics are represented as proper combinations of half-ranged cosine or sine series.
For example, Orszag~\cite{orszag1974} suggested the use of approximations of the form
\begin{equation}
u(\lambda, \theta) \approx \sum_{k=-n/2}^{n/2}\sum_{j=0}^{n/2}\hat{u}_{jk}\sin^s\theta\cos j\theta e^{ik\lambda},
\label{Orszag}
\end{equation}

\noindent where $s=0$ if $k$ is even, $s=1$ if $k$ is odd.
(Note that the Fourier series in~\reff{Orszag} approximates $u$ directly, as opposed to $\tilde{u}$.)
When using half-range cosine or sine terms as basis functions, one must be careful that the pole conditions are satisfied.
One can either select basis functions that satisfy the pole conditions or impose a constraint on the Fourier coefficients to enforce it.
For solving PDEs involving the Laplacian operator (in both value and coefficient spaces), Orszag imposed certain constraints on the coefficients $\hat{u}_{jk}$ in~\reff{Orszag}, 
analogous to those in~\reff{polecondition1},
\begin{equation}
\sum_{j=0}^{n/2} \hat{u}_{jk} = \sum_{j=0}^{n/2} (-1)^j\hat{u}_{jk} = 0, \quad |k|\geq2.
\label{polecondition2}
\end{equation}

\noindent Boyd~\cite{boyd1978} studied Orszag's method and showed that the constraints~\reff{polecondition2} are actually not necessary
for solving \textit{time-independent} PDEs in value space but mentioned that the ``absence of pole constraints is still risky'' when working with coefficients.

The spectral method we present in this paper is based on Merilees' approach and is similar to the method of Townsend et al.~\cite{townsend2016},
but the Fourier multiplication matrices we use are different.
It is simpler to implement than the half-range cosine/sine methods since there are no constraints to impose, and gives comparable accuracy.

\subsection{Fourier multiplication matrices in coefficient space}

In this section, we are interested in finding a matrix for multiplication by $\sin^2\theta$ that is nonsingular. This is crucial because some of the time-stepping methods we shall describe
in Section~3 need to compute the inverse of such a matrix. For this discussion we can restrict our attention to the 1D case.

Consider an even number $m$ of equispaced points $\{\theta_p\}_{p=1}^m$ on $[-\pi,\pi]$,
\begin{equation}
\theta_p = -\pi + (p-1)\frac{2\pi}{m}, \quad 1\leq p \leq m.
\end{equation}

\noindent (Note that these points include $-\pi$ but not $\pi$.)
Let $u$ be a complex-valued function on $[-\pi,\pi]$ with values $\{u_p\}_{p=1}^m$ at these points.
It is well known~\cite[Ch.~13]{henrici1986} that there exists a unique degree $m/2$ trigonometric polynomial $p(\theta)$ that interpolates $u(\theta)$ at these $m$ points,
i.e., such that $p(\theta_p)=u_p$ for each $p$, of the symmetric form
\begin{equation}
p(\theta) = \sum_{j=-m/2}^{m/2}{\hspace{-0.3cm}}'{\;\,}\hat{u}_j e^{ij\theta}, 
\label{triginterp}
\end{equation}

\noindent with Fourier coefficients
\begin{equation}
\hat{u}_j = \frac{1}{m}\sum_{p=1}^{m}u_pe^{-ij\theta_p}, \quad -\frac{m}{2}\leq j\leq\frac{m}{2}-1,
\end{equation}

\noindent and $\hat{u}_{m/2}=\hat{u}_{-m/2}$. 
The prime on the summation sign indicates that the terms $j=\pm m/2$ are halved.
Hence, we shall define the vector of $m+1$ Fourier coefficients as
\begin{equation}
\hat{u} = \Big(\frac{\hat{u}_{-m/2}}{2}, \hat{u}_{-m/2+1},\ldots,\hat{u}_{m/2-1},\frac{\hat{u}_{m/2}}{2}=\frac{\hat{u}_{-m/2}}{2}\Big)^T.
\label{vec1}
\end{equation}

\noindent Let us emphasize that if the trigonometric interpolant were defined as 
\begin{equation}
p(\theta) = \sum_{j=-m/2}^{m/2-1}\hat{u}_j e^{ij\theta},
\label{triginterp2}
\end{equation}

\noindent the derivative of \reff{triginterp2} would have a mode $(-im/2)e^{-im\theta/2}$ leading to complex values for real data.\footnote{Consider for example $u(\theta)=\cos(\theta)$ with $m=2$. The
representation \reff{triginterp2} gives $p(\theta)=e^{-i\theta}$ with correct values $-1$ and $1$ at grid points $\theta_1=-\pi$ and $\theta_2=0$ but its derivative $p'(\theta)=-ie^{-i\theta}$ is complex-valued on the grid. 
The representation \reff{triginterp} gives $p(\theta)=1/2(e^{-i\theta}+e^{i\theta})$, which is indeed the correct answer.}
However, FFT codes only store $m$ coefficients, i.e., they assume that $p(\theta)$ is of the form~\reff{triginterp2} with
\begin{equation}
\hat{u} = \Big(\frac{\hat{u}_{-m/2}}{2}+\frac{\hat{u}_{m/2}}{2}=\hat{u}_{-m/2}, \hat{u}_{-m/2+1},\ldots,\hat{u}_{m/2-1}\Big)^T.
\label{vec2}
\end{equation}

\noindent As a consequence, the first entry of the $m\times m$ first-order Fourier differentiation matrix $\Dbf_m$, which acts on \reff{vec2},
is zero,
\begin{equation}
\Dbf_m = \mathrm{diag}\Big(i(0,-m/2+1,-m/2+2,\ldots,m/2-1)\Big),
\label{diffmat}
\end{equation}

\noindent to cancel the mode $(-im/2)e^{-im\theta/2}$.
Another way of seeing this is to adopt the following point of view: to compute derivatives, we map the vector of $m$ Fourier coefficients \reff{vec2} to 
the representation \reff{vec1} with $m+1$ coefficients, differentiate, and then map back to $m$ coefficients.
Thus, $\Dbf_m$ can be written as the product of three matrices,\footnote{Readers might find details such as \reff{Deven}--\reff{Qmat}
unexciting, and we would not disagree. But what trouble it causes in computations if you do not get these details right!}
\begin{equation}
\Dbf_m = \Qbf\Dbf_{m+1}\Pbf
\label{Deven}
\end{equation}

\noindent where the $(m+1)\times m$ matrix $\Pbf$ maps \reff{vec2} to \reff{vec1},
\begin{equation}
\Pbf  = \begin{pmatrix}
\frac{1}{2} \\
& 1 \\
& & \ddots \\
& & & 1 \\
\frac{1}{2} & & & 0
\end{pmatrix},
\label{Pmat}
\end{equation}

\noindent $\Dbf_{m+1}$ is the $(m+1)\times(m+1)$ first-order Fourier differentiation matrix,
\begin{equation}
\Dbf_{m+1} = \mathrm{diag}\Big(i(-m/2, -m/2+1, \ldots, m/2)\Big),
\label{Dodd}
\end{equation}

\noindent and $\Qbf$ is the $m\times(m+1)$ matrix that maps back to $m$ coefficients,
\begin{equation}
\Qbf = \begin{pmatrix}
1 & & & & 1\\
& 1 \\
& & \ddots \\
& & & 1 & 0 
\end{pmatrix}.
\label{Qmat}
\end{equation}

\noindent (Note that the first entry of the differentiation matrix \reff{Dodd} is nonzero.)

The same point of view can be adopted for multiplication matrices, with the difference that multiplying by $\sin^2\theta$ or $\cos\theta\sin\theta$ 
will increase the length of the representation by four since
\begin{equation}
\sin^2\theta = -\frac{1}{4}e^{-2i\theta} + \frac{1}{2} - \frac{1}{4}e^{2i\theta}, \quad \cos\theta\sin\theta = -\frac{1}{4}e^{-2i\theta} + \frac{1}{4}e^{2i\theta}.
\end{equation}

\noindent Therefore, to multiply by, e.g., $\sin^2\theta$, we map \reff{vec2} to \reff{vec1}, multiply by $\sin^2\theta$ with an $(m+1+4)\times(m+1)$ matrix, 
and then truncate and map back to $m$ coefficients. The resulting matrix for multiplication by $\sin^2\theta$, which we denote by $\Tbf_{\sin^2}$, is given by
\begin{equation}
\Tbf_{\sin^2} = \Qbf\Mbf_{\sin^2}(:,3:m+3)\Pbf,
\label{Tsin2_a}
\end{equation}

\noindent where $\Mbf_{\sin^2}$ is the $(m+1+4)\times(m+1+4)$ matrix defined by
\begin{equation}
\Mbf_{\sin^2} = \begin{pmatrix}
\frac{1}{2} & 0 & -\frac{1}{4} \vphantom{\ddots} \\
0 & \frac{1}{2} & 0 & -\frac{1}{4} \\
-\frac{1}{4} & 0 & \ddots & \ddots & \ddots \\
& -\frac{1}{4} & \ddots & \ddots & \ddots & \ddots \\
& & \ddots & \ddots & \ddots & \ddots & -\frac{1}{4} \\
& & & \ddots & \ddots & \ddots & 0 \\
& & & & -\frac{1}{4} & 0 & \frac{1}{2} \vphantom{\ddots} 
\end{pmatrix},
\label{Msin2}
\end{equation}

\noindent $\Pbf$ is defined as before and $\Qbf$ is the following $m\times(m+1+4)$ matrix,
\begin{equation}
\Qbf = \begin{pmatrix}
0 & 0 & 1 & & & & 1 & 0 & 0\\
& & & 1 \\
& & & & \ddots \\
& & & & & 1 & 0 & 0 & 0
\end{pmatrix}.
\end{equation}

\noindent We have used MATLAB notation in \reff{Tsin2_a}: $\Mbf_{\sin^2}(:,3:m+3)$ is obtained from \reff{Msin2} by removing the first and last two columns---these
columns would hit zero coefficients in the padded-with-zeros version of $\hat{u}$. This leads to
\begin{equation}
\Tbf_{\sin^2} = \begin{pmatrix}
\frac{1}{2} & 0 & -\frac{1}{4} & & & & -\frac{1}{4} & 0 \vphantom{\ddots} \\
0 & \frac{1}{2} & 0 & -\frac{1}{4} & & & & 0 \\
-\frac{1}{8} & 0 & \ddots & \ddots & \ddots \\
& -\frac{1}{4} & \ddots & \ddots & \ddots & \ddots \\
& & -\frac{1}{4} & \ddots & \ddots & \ddots & -\frac{1}{4} \\
& & & \ddots & \ddots & \ddots & \ddots & -\frac{1}{4} \\
-\frac{1}{8} & & & & -\frac{1}{4} & \ddots & \frac{1}{2} & 0 \\
0 & 0 & & & & -\frac{1}{4} & 0 & \frac{1}{2} \vphantom{\ddots}
\end{pmatrix}.
\label{Tsin2_b}
\end{equation}

\noindent Using the Gershgorin circle theorem~\cite{varga2004} we see that the $m\times m$ matrix \reff{Tsin2_b} is nonsingular since it is row diagonally dominant, with strict diagonal dominance in the second row, and irreducible.

Let us add some comments about \reff{Tsin2_b}. 
If we operated in value space, we would obtain a singular matrix, since the multiplication matrix in value space, 
$\Mbf_{\sin^2}^v$, a diagonal matrix with entries $\{\sin^2\theta_p\}_{p=1}^m$, has two zeros corresponding to 
$\theta_p=-\pi$ and $\theta_p=0$.
(The standard remedy in that case is to shift the $\theta$-grid so that it does not contain the poles~\cite{merilees1973}.)
From this matrix, we can obtain a multiplication in coefficient space by multiplying by the DFT matrix $\textbf{F}$ and its inverse, 
\begin{equation}
\textbf{F}\Mbf_{\sin^2}^v\textbf{F}^{-1} = \begin{pmatrix}
\frac{1}{2} & 0 & -\frac{1}{4} & & & & -\frac{1}{4} & 0 \vphantom{\ddots} \\
0 & \frac{1}{2} & 0 & -\frac{1}{4} & & & & -\frac{1}{4} \\
-\frac{1}{4} & 0 & \ddots & \ddots & \ddots \\
& -\frac{1}{4} & \ddots & \ddots & \ddots & \ddots \\
& & \ddots & \ddots & \ddots & \ddots & -\frac{1}{4} \\
& & & \ddots & \ddots & \ddots & \ddots & -\frac{1}{4} \\
-\frac{1}{4} & & & & -\frac{1}{4} & \ddots & \frac{1}{2} & 0 \\
0 & -\frac{1}{4} & & & & -\frac{1}{4} & 0 & \frac{1}{2} \vphantom{\ddots}
\end{pmatrix}.
\label{Msin2t}
\end{equation}

\noindent This matrix is indeed singular since $\textbf{F}$ defines a unitary transformation, and its null space contains the vectors $(1,1,\ldots)^T$ and $(1,-1,1,-1,\ldots)^T$,
which correspond to the Fourier coefficients of the delta functions at $\theta=0$ and $\theta=-\pi$.

In~\cite{townsend2016}, the authors use the $m\times m$ version of \reff{Msin2} (which is also nonsingular) as opposed to \reff{Tsin2_b}; 
this leads to incorrect results for trigonometric polynomials of degree $m/2-2$.
To illustrate this, let us consider $m=6$ and multiply $\sin^2(\theta)$ by a trigonometric polynomial of degree $m/2-2=1$, e.g., $\cos(\theta)$.
In the representation~\reff{vec2}, the function $\cos(\theta)=1/2(e^{-i\theta}+e^{i\theta})$ has coefficients
\begin{equation}
c = \Big(0, 0, \frac{1}{2}, 0, \frac{1}{2}, 0\Big)^T,
\end{equation}

\noindent while the product $\cos(\theta)\sin(\theta)=-1/8(e^{-3i\theta}+e^{3i\theta}) + 1/8(e^{-i\theta}+e^{i\theta})$ has coefficients
\begin{equation}
d = \Big(-\frac{1}{4}, 0, \frac{1}{8}, 0, \frac{1}{8}, 0\Big)^T.
\end{equation}

\noindent For $m=6$, we have
\begin{equation}
\Tbf_{\sin^2} = \begin{pmatrix} 
\frac{1}{2} & 0 & -\frac{1}{4} & 0 & -\frac{1}{4} & 0 \\
0 & \frac{1}{2} & 0 &  -\frac{1}{4} & 0 & 0 \\
-\frac{1}{8} & 0 & \frac{1}{2} & 0 & -\frac{1}{4} & 0 \\
0 & -\frac{1}{4} & 0 & \frac{1}{2} & 0 & -\frac{1}{4} \\
-\frac{1}{8} & 0 & -\frac{1}{4} & 0 & \frac{1}{2} & 0 \\
0 & 0 & 0 & -\frac{1}{4} & 0 & \frac{1}{2}
\end{pmatrix}, \quad
\Tbf_{\sin^2}\,c = \Big(-\frac{1}{4}, 0, \frac{1}{8}, 0, \frac{1}{8}, 0\Big)^T = d,
\end{equation}
\noindent while
\begin{equation}
\Mbf_{\sin^2} = \begin{pmatrix} 
\frac{1}{2} & 0 & -\frac{1}{4} & 0 & 0 & 0 \\
0 & \frac{1}{2} & 0 &  -\frac{1}{4} & 0 & 0 \\
-\frac{1}{4} & 0 & \frac{1}{2} & 0 & -\frac{1}{4} & 0 \\
0 & -\frac{1}{4} & 0 & \frac{1}{2} & 0 & -\frac{1}{4} \\
0 & 0 & -\frac{1}{4} & 0 & \frac{1}{2} & 0 \\
0 & 0 & 0 & -\frac{1}{4} & 0 & \frac{1}{2}
\end{pmatrix}, \quad
\Mbf_{\sin^2}\,c = \Big(-\frac{1}{8}, 0, \frac{1}{8}, 0, \frac{1}{8}, 0\Big)^T \neq d.
\end{equation}

Similarly, the matrix for multiplication by $\cos\theta\sin\theta$ is the product of three matrices,
\begin{equation}
\Tbf_{\cos\sin} = \Qbf\Mbf_{\cos\sin}(:,3:m+3)\Pbf,
\end{equation}

\noindent with
\begin{equation}
\Mbf_{\cos\sin} = \begin{pmatrix}
0 & 0 & \frac{i}{4} \vphantom{\ddots} \\
0 & 0 & 0 & \frac{i}{4} \\
-\frac{i}{4} & 0 & \ddots & \ddots & \ddots \\
& -\frac{i}{4} & \ddots & \ddots & \ddots & \ddots \\
& & \ddots & \ddots & \ddots & \ddots & \frac{i}{4} \\
& & & \ddots & \ddots & \ddots & 0 \\
& & & & -\frac{i}{4} & 0 & 0 \vphantom{\ddots}
\end{pmatrix},
\end{equation}

\noindent and is given by
\begin{equation}
\Tbf_{\cos\sin} = \begin{pmatrix}
0 & 0 & \frac{i}{4} & & & & -\frac{i}{4} & 0 \vphantom{\ddots} \\
0 & 0 & 0 & \frac{i}{4} & & & & 0 \\
-\frac{i}{8} & 0 & \ddots & \ddots & \ddots \\
& -\frac{i}{4} & \ddots & \ddots & \ddots & \ddots \\
& & -\frac{i}{4} & \ddots & \ddots & \ddots & \frac{i}{4} \\
& & & \ddots & \ddots & \ddots & \ddots & \frac{i}{4} \\
\frac{i}{8} & & & & -\frac{i}{4} & \ddots & 0 & 0 \\
0 & 0 & & & & -\frac{i}{4} & 0 & 0 \vphantom{\ddots}
\end{pmatrix}.
\end{equation}

\subsection{Laplacian matrix and linear systems}

The Laplacian operator on the sphere is 
\begin{equation}
\Delta u = \frac{1}{\sin\theta}\big(\sin\theta\,u_\theta\big)_\theta + \frac{1}{\sin^2\theta}u_{\lambda\lambda},
\end{equation}

\noindent which we write as
\begin{equation}
\Delta u = u_{\theta\theta} + \frac{\cos\theta\sin\theta}{\sin^2\theta}u_\theta + \frac{1}{\sin^2\theta}u_{\lambda\lambda}.
\label{lap}
\end{equation}

\noindent We want to discretize $\Delta$ with a matrix $\Lbf$ using a Fourier spectral method in coefficient space on an $n\times m$ uniform
longitude-latitude grid~\reff{Grid2D}, and we look for a solution of the form~\reff{Fourier2D}--\reff{Coeffs2D}.
Using Kronecker products, we can write $\Lbf$ as
\begin{equation}
\Lbf = \Ibf_n\otimes(\Dbf_{m}^{(2)} +  \Tbf_{\sin^2}^{-1} \Tbf_{\cos\sin} \Dbf_{m}) + \Dbf_{n}^{(2)}\otimes(\Tbf_{\sin^2}^{-1}),
\label{Laplacian}
\end{equation}

\noindent where $\Tbf_{\sin^2}$, $\Tbf_{\cos\sin}$ and $\Dbf_m$ have been defined in the previous section, $\Ibf_n$ is the $n\times n$ identity matrix and 
$\Dbf_{m}^{(2)}$ is the second-order Fourier differentiation matrix,
\begin{equation}
\Dbf^{(2)}_m = \mathrm{diag}\Big(-(m/2)^2, -(m/2-1)^2, \ldots, -1, 0, -1, \ldots, -(m/2-1)^2\Big).
\label{diffmat2}
\end{equation}

\noindent Note that the matrix $\Lbf$ is block diagonal with $n$ dense blocks of size $m\times m$. 
Let us emphasise that the $n$ blocks correspond to the $n$ longitudinal wavenumbers $-n/2\leq k\leq n/2-1$ and that the size~$m$ of each block corresponds to the $m$ latitudinal wavenumbers $-m/2\leq j\leq m/2-1$.

\begin{figure}
\hspace{-.5cm}
\begin{minipage}[t]{0.32\hsize}
\includegraphics[height=0.93\textwidth]{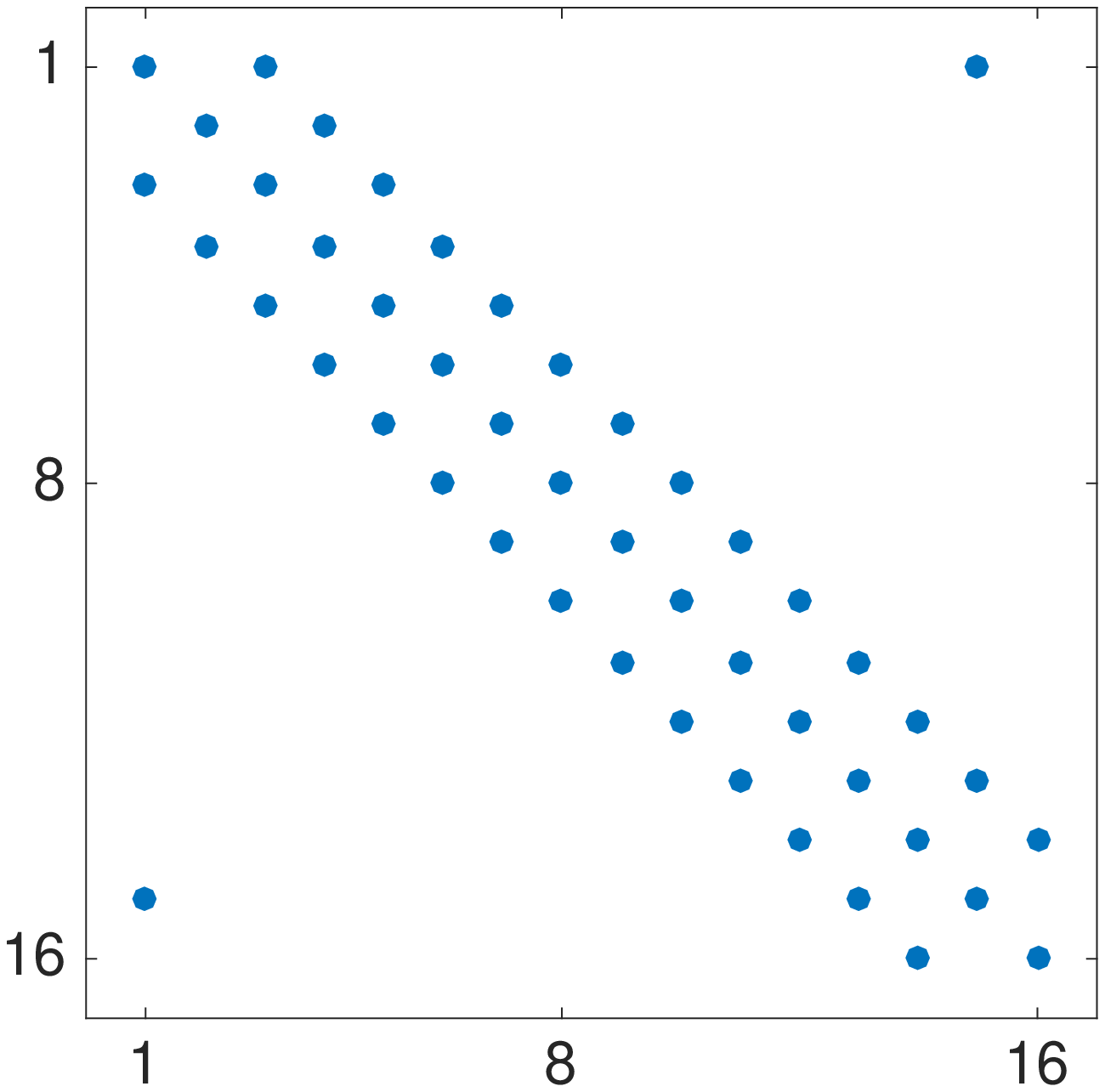}
\end{minipage}   
\begin{minipage}[t]{0.32\hsize}
\includegraphics[height=0.93\textwidth]{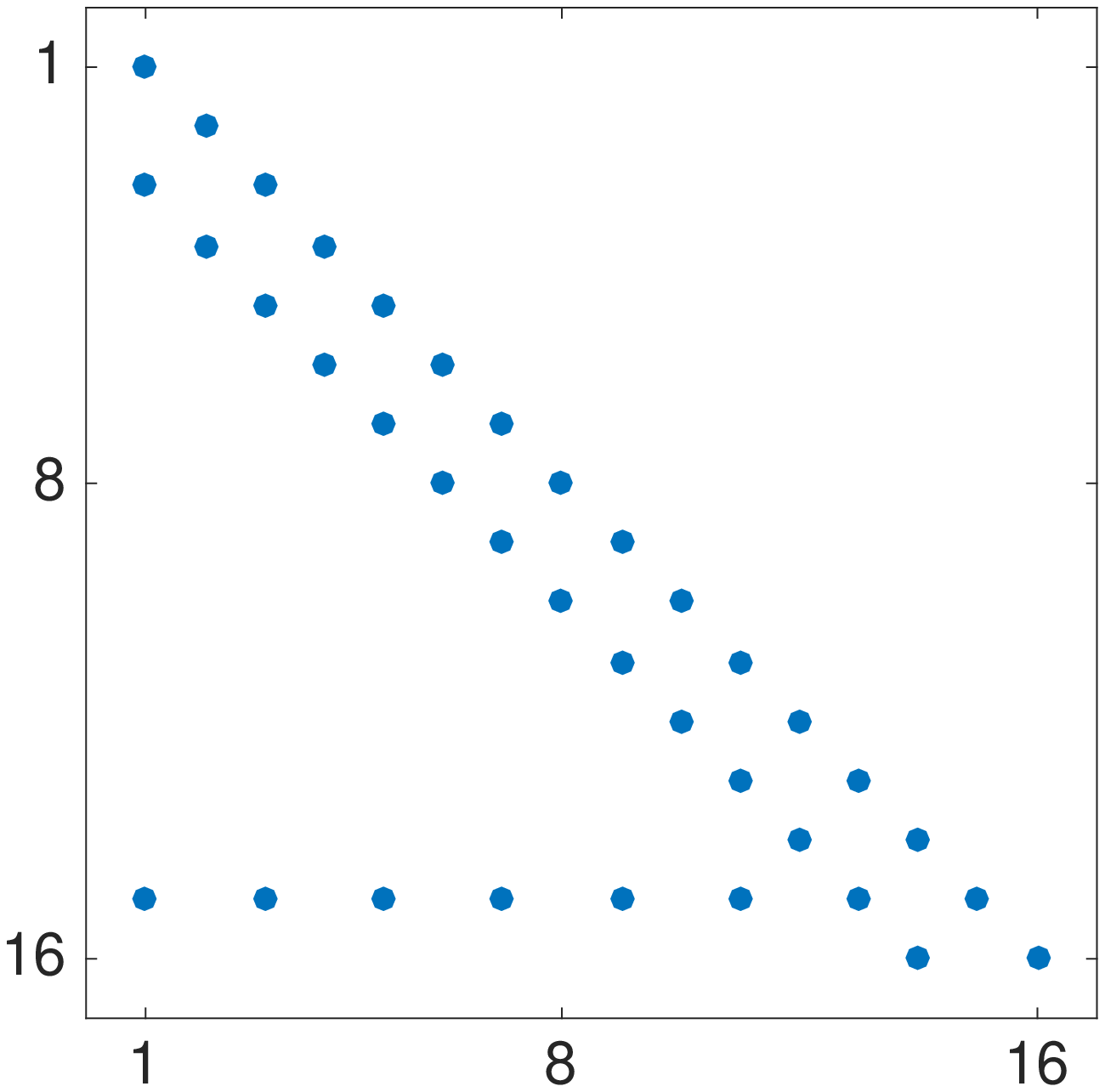}
\end{minipage}
\begin{minipage}[t]{0.32\hsize}
\includegraphics[height=0.93\textwidth]{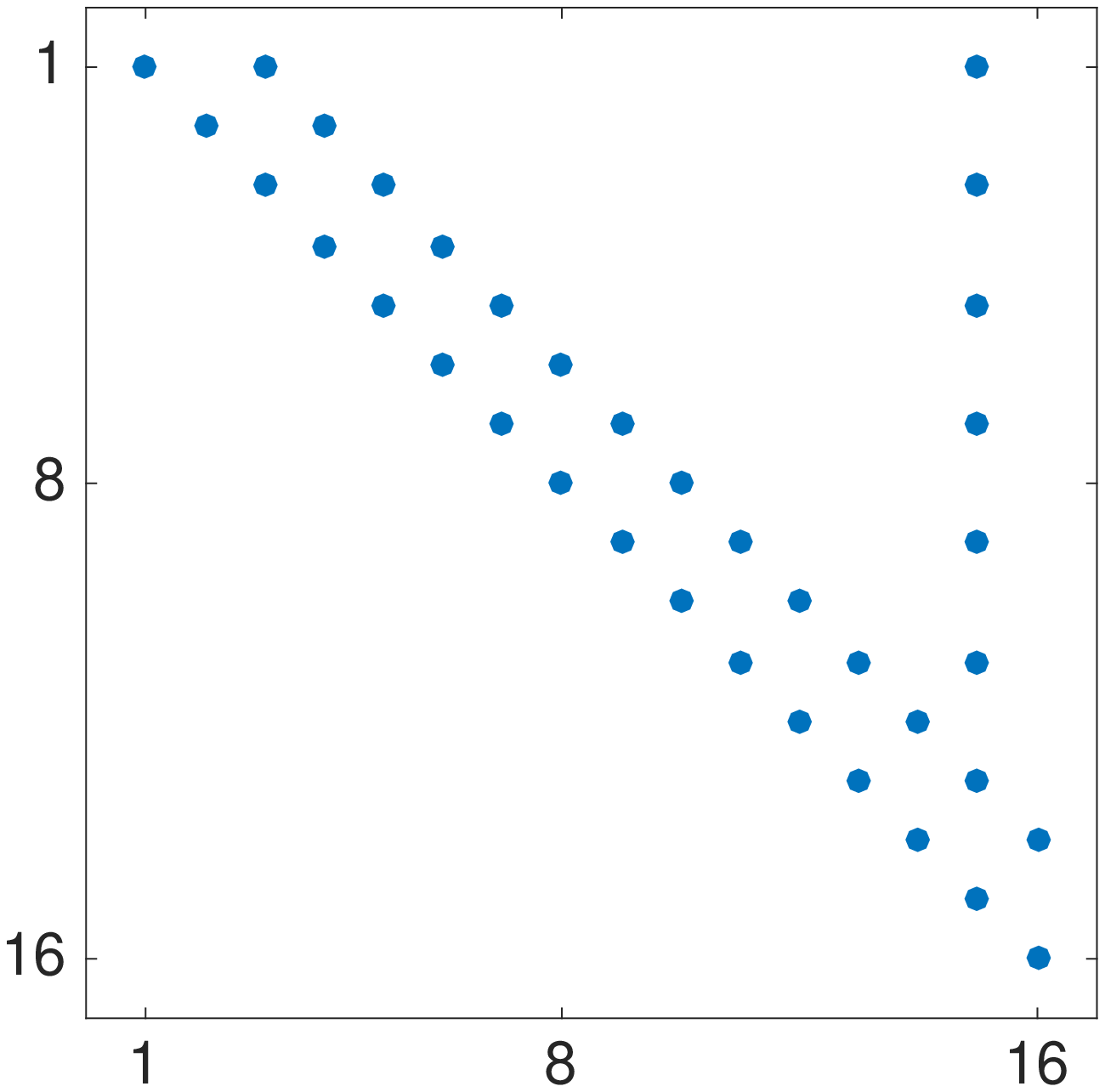}
\end{minipage}
\caption{\textit{Sparsity pattern of the matrix $z\Tbf_{\sin^2} + w\Tbf_{\sin^2}\Lbf_i$ (left), and its $\mrm{L}$ (middle) and $\mrm{U}$ (right) factors for $m=16$. Triangular systems involving $\mrm{L}$ and $\mrm{U}$ are solvable in $\mathcal{O}(m)$ operations.}
}
\label{fig:LUspyplots}
\end{figure}

Some of the time-stepping schemes we shall describe in Section 3 involve solving linear systems of the form $(z\Ibf_{nm} + w\Lbf)x=b$,
where $\Ibf_{nm}$ denotes the $nm\times nm$ identity matrix.
Fortunately, the matrix structure allows for a linear-cost direct solver.
The key observation is that $\Lbf$ is block diagonal, and each $m\times m$ block $\Lbf_i$ of $\Lbf$,
\begin{equation}
\Lbf_i = \Dbf_m^{(2)} + \Tbf_{\sin^2}^{-1}\Tbf_{\cos\sin}\Dbf_m + \Dbf^{(2)}_n(i,i)\Tbf_{\sin^2}^{-1},
\end{equation}

\noindent is dense but $(z\Ibf_m + w\Lbf_i)x = b$ can be solved in $\mathcal{O}(m)$ operations since it is equivalent to solving
\begin{equation}
(z\Tbf_{\sin^2} + w\Tbf_{\sin^2}\Lbf_i)x = \Tbf_{\sin^2}b,
\label{eq:shiftlin}
\end{equation}

\noindent and $(z\Tbf_{\sin^2} + w\Tbf_{\sin^2}\Lbf_i)$ is pentadiagonal with two (near-)corner elements. 
Therefore, using a sparse direct solver based on the standard LU factorization without pivots~\cite{davis2006}, 
the L and U factors have the sparsity patterns indicated in Figure~\ref{fig:LUspyplots} (due to the diagonal dominance in most blocks, the LU factorization without pivoting completes 
without breaking down\footnote{For the IMEX schemes of Section~3.2, we can prove that all the blocks but one are diagonally dominant; for ETDRK4-CF (see Section~3.1.1), the diagonal
dominance is violated much more frequently. Nonetheless, diagonal dominance is merely a sufficient condition for the LU factorization to not require pivoting, and in practice, 
all the methods result in linear systems for which the LU factorization causes no stability issues.}). 
Since L and U have at most three nonzero elements per column and row respectively, each triangular linear system is solvable in $\mathcal{O}(m)$ operations; 
thus once an LU factorization is computed, the linear system~\eqref{eq:shiftlin} can be solved in $\mathcal{O}(m)$ operations. 
Therefore, linear systems of the form
\begin{equation}
(z\Ibf_{nm} + w\Lbf)x = b
\label{eq:shiftlinsysL}
\end{equation}

\noindent can be solved blockwise in $\mathcal{O}(nm)$ operations.\footnote{In practice we do not solve linear systems of the form~\reff{eq:shiftlinsysL} blockwise, 
i.e., by solving $n$ linear systems~\reff{eq:shiftlin}.
Instead, a more efficient approach is to store the left-hand sides of~\reff{eq:shiftlin} altogether as a sparse matrix and use MATLAB's sparse linear solver.}
Because of the structure of the LU factors (see Figure~\ref{fig:LUspyplots}), the coefficients one gets when solving systems of the form~\reff{eq:shiftlinsysL} have the 
property that the even modes in $\lambda$ correspond to even functions in $\theta$ and the odd modes to odd functions.
To see this, note that every other term in the LU factors is zero, so the even and odd modes are decoupled.

\subsection{Poisson's equation}

To test the accuracy of \reff{Laplacian}, we first solve a \textit{time-independent} PDE, \textit{Poisson's equation}, with a zero-mean condition for uniqueness,
\begin{equation}
\begin{array}{l}
\Delta u=f(\lambda,\theta), \quad (\lambda,\theta)\in[-\pi,\pi]\times[0,\pi], \\\\
\dsp\int_{0}^{\pi}\int_{-\pi}^{\pi} u(\lambda, \theta)\sin\theta d\lambda d\theta = 0,
\end{array}
\label{Poisson}
\end{equation}

\noindent where $f$ also has zero mean on $[-\pi,\pi]\times[0,\pi]$. Using the DFS method, we seek a solution $\tilde{u}$ of the ``doubled-up'' version of \reff{Poisson},
\begin{equation}
\begin{array}{l}
\Delta \tilde{u}=\tilde{f}(\lambda,\theta), \quad (\lambda,\theta)\in[-\pi,\pi]^2, \\\\
\dsp\int_{0}^{\pi}\int_{-\pi}^{\pi} \tilde{u}(\lambda, \theta)\sin\theta d\lambda d\theta = 0,
\end{array}
\label{Poisson2}
\end{equation}

\noindent of the form~\reff{Fourier2D}--\reff{Coeffs2D}.
The true solution $u$ can be recovered by restricting $\tilde{u}$ to $(\lambda,\theta)\in[-\pi,\pi]\times[0,\pi]$. 
(Note that the zero-mean condition in \reff{Poisson2} is on the original domain $[-\pi,\pi]\times[0,\pi]$ since $\tilde{u}$ must coincide with $u$ on this domain.)
Townsend et al.~\cite{townsend2016} showed that the zero-mean condition can be discretized as
\begin{equation}
\begin{array}{ll}
\dsp \int_{0}^{\pi}\int_{-\pi}^{\pi} \tilde{u}(\lambda, \theta)\sin\theta d\lambda d\theta 
& \dsp \approx \sum_{j=-m/2}^{m/2}{\hspace{-0.3cm}}'{\;\,}\sum_{k=-n/2}^{n/2}{\hspace{-0.3cm}}'{\;\,}\hat{u}_{jk}\int_0^\pi\sin\theta e^{ij\theta}d\theta\int_{-\pi}^\pi e^{ik\lambda}d\lambda \\\\
& \dsp = 2\pi\sum_{j=-m/2}^{m/2}{\hspace{-0.3cm}}'{\;\,}\hat{u}_{j0}\frac{1 + e^{ij\pi}}{1-j^2}=0.
\end{array}
\label{zeromean}
\end{equation}

\noindent Poisson's equation \reff{Poisson2} is then discretized by
\begin{equation}
\Lbf\hat{u} = \hat{f},
\label{Poisson3}
\end{equation}

\noindent where $\hat{u}$ and $\hat{f}$ are the vectors of $nm$ Fourier coefficients~\reff{Coeffs2D} of $\tilde{u}$ and $\tilde{f}$, and $\Lbf$ is the Laplacian matrix \reff{Laplacian}.
We impose the zero-mean condition by replacing the $(m/2+1)$st row of the $(n/2+1)$st block of~$\Lbf$ by \reff{zeromean}.\footnote{The $(n/2+1)$st block of~$\Lbf$ corresponds to longitudinal wavenumber $k=0$ while the $(m/2+1)$st row of this block corresponds to latitudinal wavenumber $j=0$. }
Note that, given the Fourier coefficients of $\tilde{u}$ and $\tilde{f}$, \reff{Poisson3} can be solved in $\mathcal{O}(nm)$ operations since it is of the form~\reff{eq:shiftlinsysL} with $z=0$.

Since the Laplacian operator on the sphere has real eigenvalues $-l(l+1)$, $l\geq 0$, with eigenfunctions the spherical harmonics $Y^m_l(\lambda,\theta)$~\cite{atkinson2012}, a simple test is to take the right-hand side $f$ to be a spherical harmonic $Y_{l}^{m}$ with exact solution $u=-1/(l(l+1))Y_{l}^{m}$.
A slightly more complicated test is given in \cite{cheong2000a} and this is what we shall present here. 
We solve Poisson's equation for a family of right-hand sides $f_l$ defined by
\begin{equation}
f_l(\lambda,\theta) = l(l+1)\sin^l\theta\cos (l\lambda) +  (l+1)(l+2)\cos\theta\sin^l\theta\cos (l\lambda), \quad l\geq1.
\end{equation}

\noindent The exact solution $u_l^{ex}(\lambda,\theta)$ is given by
\begin{equation}
u_l^{ex}(\lambda,\theta) = -\sin^l\theta\cos (l\lambda) - \cos\theta\sin^l\theta\cos (l\lambda), \quad l\geq1.
\end{equation}

\noindent We compute the solutions $u_l$ for $m=n=128$ grid points in each direction for $1\leq l\leq 64$ and, following \cite[Fig.~1]{cheong2000a}, we plot the logarithm (in base 10) of the relative $L^2$-error $E$,
\begin{equation}
E = \frac{||u_l(\lambda,\theta) - u_l^{ex}(\lambda,\theta)||_2}{||u_l^{ex}(\lambda,\theta)||_2},
\label{error_Poisson}
\end{equation}

\noindent with (continuous) $L^2$-norm on the sphere $||\cdot||_2$, against $l$.
(The $L^2$-norm of a \texttt{spherefun} can be computed in Chebfun with the \texttt{norm} command.)
We also plot the logarithm of the error $P$ in satisfying the pole condition~\reff{polecondition1}, i.e.,
\begin{equation}
P = \max\Bigg(\max_{k\neq 0}\Bigg|\sum_{j=-m/2}^{m/2}{\hspace{-0.3cm}}'{\;\,}\hat{u}_{jk}^l\Bigg|, \max_{k\neq 0}\Bigg|\sum_{j=-m/2}^{m/2}{\hspace{-0.3cm}}'{\;\,}(-1)^j\hat{u}_{jk}^l\Bigg|\Bigg),
\label{error_polecondition}
\end{equation}

\noindent where the $\hat{u}_{jk}^l$ are the computed coefficients of $\tilde{u}_l$; see Figure \ref{fig:poisson}. 
The accuracy is excellent; the results are similar to those shown in~\cite[Fig.~1]{cheong2000a} and to results we have obtained with the Poisson solver of~\cite{townsend2016}
(although the discretization matrices are different as noted in Section 2.2, the effect on the solution is negligible when $m$ and $n$ are taken large enough).

\begin{figure}
\centering
\includegraphics[scale=.5]{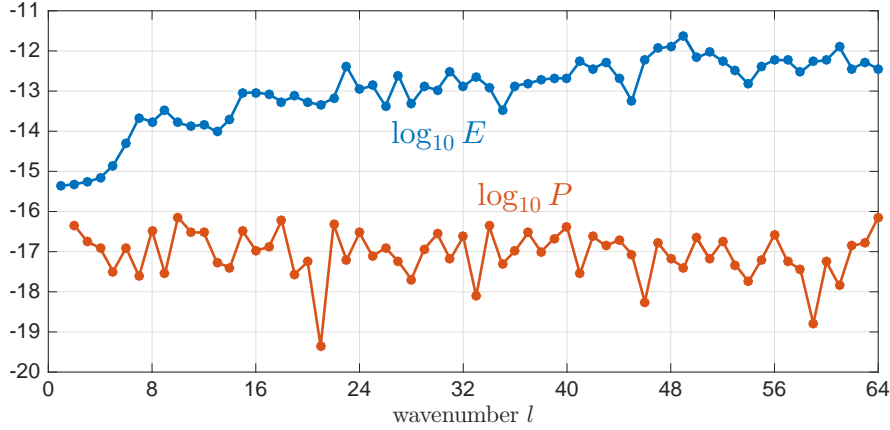}
\caption{\textit{Variation of the relative error $\log_{10}E$ and of the error in the pole condition $\log_{10}P$ with wavenumber $l$ for $m=n=128$.  
The accuracy is excellent for every wavenumber $1\leq l\leq 64$.}}
\label{fig:poisson}
\end{figure}

\subsection{Heat equation}

\begin{figure}
\centering
\includegraphics[scale=.4]{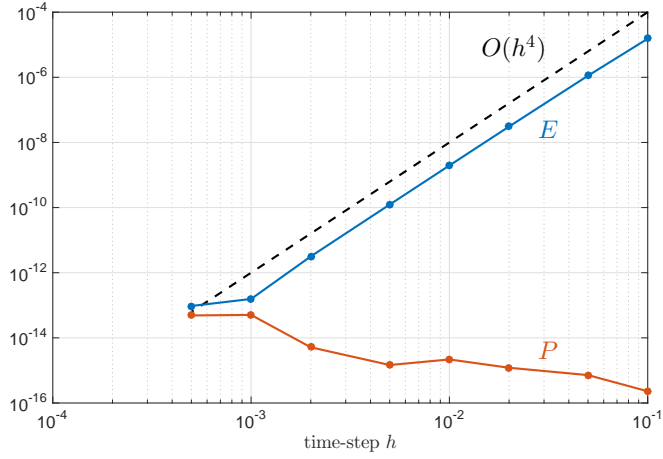}
\caption{\textit{Variation of the relative error $E$ at $t=1$ and of the error in the pole condition $P$ with time-step $h$ for $m=n=128$.
The error $E$ scales as $\mathcal{O}(h^4)$.
Note that the initial condition $Y_{64}^{64}(\lambda,\theta)$ corresponds to the highest wavenumbers that can be resolved on a $128\times 128$ grid.}}
\label{fig:heat}
\end{figure}

As we mentioned in the introduction, Orszag~\cite{orszag1974} and Boyd~\cite{boyd1978} showed that when solving a \textit{time-dependent} PDE 
involving the Laplacian with half-range cosine/sine series \`a la \reff{Orszag}, it is crucial to impose the pole conditions \reff{polecondition2}; 
otherwise, they observed that ``the numerical solution still converged as the time step was shortened---to a wildly wrong answer.''
Therefore, let us now test the accuracy of~\reff{Laplacian} with a time-dependent PDE and illustrate that the pole conditions \reff{polecondition1} are
also satisfied in this case.

We consider the \textit{heat equation} with a thermal diffusivity and an initial condition that lead to a particularly simple exact solution,
\begin{equation}
u_t = \frac{1}{l(l+1)}\Delta u, \quad u(t=0,\lambda,\theta) = Y_{l}^{m}(\lambda,\theta), \quad (\lambda,\theta)\in[-\pi,\pi]\times[0,\pi].
\label{Heat}
\end{equation}

\noindent The exact solution is $u^{ex}(t,\lambda,\theta) = e^{-t}Y_{l}^{m}(\lambda,\theta)$.
Using the DFS method, we seek a solution $\tilde{u}$ of the ``doubled-up'' version of \reff{Heat},
\begin{equation}
\tilde{u}_t = \frac{1}{l(l+1)}\Delta \tilde{u}, \quad \tilde{u}(t=0,\lambda,\theta) = Y_{l}^{m}(\lambda,\theta), \quad (\lambda,\theta)\in[-\pi,\pi]^2,
\label{Heat2}
\end{equation}

\noindent of the form
\begin{equation}
\tilde{u}(t, \lambda, \theta) \approx \sum_{j=-m/2}^{m/2}{\hspace{-0.3cm}}'{\;\,}\sum_{k=-n/2}^{n/2}{\hspace{-0.3cm}}'{\;\,}\hat{u}_{jk}(t)e^{ij\theta}e^{ik\lambda}.
\label{trigsol}
\end{equation}

\noindent We discretize the Laplacian operator with~\reff{Laplacian} and use the fourth-order backward differentiation formula to march in time.
We take $l=64$, $\tilde{u}(t=0,\lambda,\theta)=Y_{64}^{64}(\lambda,\theta)$, $m=n=128$ grid points and solve~\reff{Heat2} up to $t=1$ for various time-steps $h$.
We plot the relative $L^2$-error $E$ at $t=1$,
\begin{equation}
E = \frac{||u(t=1,\lambda,\theta) - u^{ex}(t=1,\lambda,\theta)||_2}{||u^{ex}(t=1,\lambda,\theta)||_2},
\label{error_Heat}
\end{equation}

\noindent and the error in the pole conditions (as defined in~\reff{error_polecondition}) against $h$; the error $E$ scales as $\mathcal{O}(h^4)$, see Figure~\ref{fig:heat}.
(With $m=n=128$ grid points, the error due to the spatial discretization is small compared to the error due to the time discretization, so we are really measuring the latter.)

\subsection{Bound for the eigenvalues of the Laplacian matrix}

As mentioned in Section~2.4, the eigenvalues of the Laplacian operator are $-l(l+1)$, for integers $l\geq 0$. What about the eigenvalues of the Laplacian matrix~\reff{Laplacian}?
We show in Appendix~A that they are all real and nonpositive, and have observed numerically that some of them are spectrally accurate approximations to the eigenvalues $-l(l+1)$, but some others, the so-called \textit{outliers}~\cite[Ch.~10]{trefethen2000}, are of order $\mathcal{O}(n^2m^2)$ as $n=m\rightarrow\infty$. We shall prove the latter fact below. These large eigenvalues are meaningless physically but of crucial importance in practice since for time-stepping algorithms applied to \reff{ODE} to be stable, we need the eigenvalues of \reff{Laplacian}, scaled by the time-step, to lie in their stability region.

We examine now the largest (in magnitude) eigenvalues of~\reff{Laplacian}. It suffices to examine each block 
\begin{equation}
\Lbf_i = \big(\Dbf_m^{(2)} + \Tbf_{\sin^2}^{-1}\Tbf_{\cos\sin}\Dbf_m + \Dbf^{(2)}_n(i,i)\Tbf_{\sin^2}^{-1}\big), 
\end{equation}

\noindent whose largest eigenvalue can be bounded as 
\begin{equation}
|\lambda_{\max}(\Lbf_i)|\leq \|\Lbf_i\|\leq \|\Dbf_m^{(2)}\| + \|\Tbf_{\sin^2}^{-1}\|\|\Tbf_{\cos\sin}\Dbf_m + \Dbf^{(2)}_n(i,i)\Ibf_m\|.
\end{equation}

\noindent We trivially have $\|\Dbf_m^{(2)}\|=\mathcal{O}(m^2)$ and 
\begin{equation}
\|\Tbf_{\cos\sin}\Dbf_m + \Dbf^{(2)}_n(i,i)\Ibf_m\|
\leq \|\Tbf_{\cos\sin}\Dbf_m\| + |\Dbf^{(2)}_n(i,i)|
=\mathcal{O}(m+i^2).
\end{equation}

\noindent It remains to bound $\|\Tbf_{\sin^2}^{-1}\|$; we claim that this is  $\mathcal{O}(m^2)$. 
To verify this, we come back to the definition of $\Tbf_{\sin^2} = \Qbf\Mbf_{\sin^2}(:,3:m+3)\Pbf$ from \reff{Tsin2_a},
and note that
\begin{equation}
\Qbf^T= \begin{pmatrix}
\mathbf{0}_{2\times m}\\ \Pbf \\ \mathbf{0}_{2\times m}    
\end{pmatrix}
\mrm{diag}(2,1,\ldots,1).
\end{equation}

\noindent We can then write
\begin{equation}
\Tbf_{\sin^2}=\Qbf\Mbf_{\sin^2}\Qbf^T\mbox{diag}(\frac{1}{2},1,\ldots,1)=\mbox{diag}(\sqrt{2},1,\ldots,1)\tilde\Qbf\Mbf_{\sin^2}\tilde\Qbf^T\mbox{diag}(\frac{1}{\sqrt{2}},1,\ldots,1), 
\end{equation}

\noindent where $\tilde\Qbf := \mrm{diag}(\frac{1}{\sqrt{2}},1,\ldots,1)\Qbf$ has orthonormal rows.
Hence, using the fact that $\sigma_{\min}(\mathbf{ABC})\geq \sigma_{\min}(\mathbf{A})\sigma_{\min}(\mathbf{B})\sigma_{\min}(\mathbf{C})$ (which holds when $\mathbf{A}$ and $\mathbf{C}$ are square), we obtain
\begin{equation}
\sigma_{\min}(\Tbf_{\sin^2}) \geq \frac{1}{\sqrt{2}}\sigma_{\min}(\tilde\Qbf\Mbf_{\sin^2}\tilde\Qbf^T). 
\end{equation}

\noindent Now, since $\Mbf_{\sin^2}$ is symmetric positive definite, we have 
$\sigma_{\min}(\tilde\Qbf\Mbf_{\sin^2}\tilde\Qbf^T) =\lambda_{\min}(\tilde\Qbf\Mbf_{\sin^2}\tilde\Qbf^T) 
\geq \lambda_{\min}(\Mbf_{\sin^2}(3:m+3,3:m+3))$, where we used the nonzero structure of $\tilde\Qbf$ for the last inequality. 
Moreover, the eigenvalues of $\Mbf_{\sin^2}(3:m+3,3:m+3)$ are explicitly known to be 
\begin{equation}
\lambda_j= \Big\{\frac{1}{2}\big(\cos(\pi j/(m/2+1))+1\big),\,1\leq j\leq \frac{m}{2}\Big\} 
\cup\Big\{\frac{1}{2}\big(\cos(\pi j/(m/2+2))+1\big),\,1\leq j\leq \frac{m}{2}+1\Big\}.
\label{eq:cluster}
\end{equation}

\noindent 
Thus $1/\sigma_{\min}(\tilde\Qbf\Mbf_{\sin^2}\tilde\Qbf^T) \leq 1/\min_j\lambda_j=\mathcal{O}(m^2)$, and hence $\|\Tbf_{\sin^2}^{-1}\|=1/\sigma_{\min}(\Tbf_{\sin^2})=\mathcal{O}(m^2)$, as required.
We conclude that  $\|\Lbf_i\|=\mathcal{O}(m^2(i^2+m))$; since this holds for every $i$, it follows that 
\begin{equation}
\label{eq:Leig}
|\lambda_{\max}(\Lbf)|=\mathcal{O}(n^2m^2+m^3). 
\end{equation}

\noindent This bound scales as $\mathcal{O}(n^2m^2)$ when $m=n$ (our usual choice).
We illustrate \reff{eq:Leig} in Figure~\ref{fig:maxeig}, which suggests that it is sharp.

The fact that the largest eigenvalue has order $n^2m^2$ makes time-dependent PDEs on the sphere particularly stiff.
It is a consequence of both the second order of the Laplacian operator and the clustering of the points near the poles in~\reff{eq:cluster}.
It would imply severe restrictions on the time-steps for generic explicit algorithms---this is why we use exponential integrators and IMEX schemes, which we describe next.
(In the literature, the severe time-stepping restrictions due to uniform longitude-latitude grids is sometimes called the \textit{pole problem}~\cite{boyd1978, orszag1974}).
Note that the time-stepping restrictions resulting from the clustering near the poles can be addressed by truncating high-frequency terms in the space discretization 
(see, e.g.,~\cite[Sec.~2.1.6]{fornberg1998} and \cite{fornberg1997}). 
However, this approach does not overcome stiffness resulting from the second-order operator. 

\begin{figure}
\centering
\includegraphics[scale=.4]{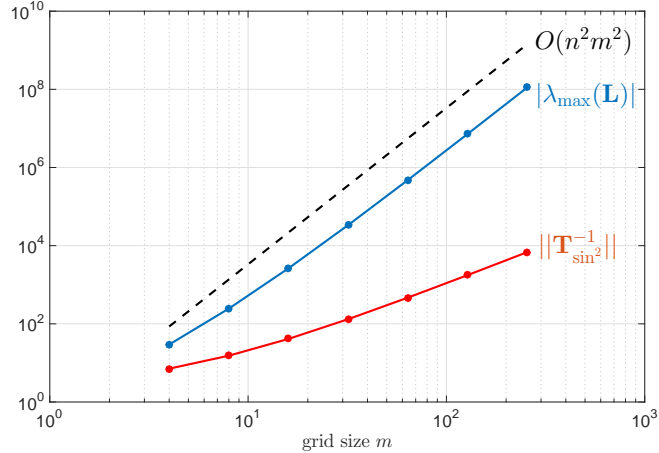}
\caption{\textit{Variation of $|\lambda_{\max}(\Lbf)|$ with $m=n$.
The bound~\eqref{eq:Leig} is accurately reflected.}}
\label{fig:maxeig}
\end{figure}

\section{Fourth-order time-stepping on the sphere}

Using the DFS method, we seek a solution $\tilde{u}$ of the ``doubled-up'' version of \reff{PDE2},
\begin{equation}
\tilde{u}_t = \alpha\Delta\tilde{u} + \mathcal{N}(\tilde{u}), \quad \tilde{u}(t=0,\lambda,\theta)=\tilde{u}_0(\lambda,\theta), \quad  (\lambda,\theta)\in[-\pi,\pi]^2,
\label{PDE3}
\end{equation}

\noindent of the form~\reff{trigsol}.
Discretizing the Laplacian operator with the Laplacian matrix~\reff{Laplacian}, we obtain the system of $nm$ ODEs \reff{ODE} where $\Lbf$ is \reff{Laplacian} multiplied by $\alpha$.
Time is discretized with time-step $h$ and the problem is to find the Fourier coefficients $\hat{u}^{n+1}$ of $\tilde{u}$ at $t_{n+1}=(n+1)h$ from the coefficients $\hat{u}^{n}$ at $t_{n}=nh$ and also coefficients at previous time-steps (for multistep schemes).
Note that, in practice, nonlinear evaluations $\Nbf(\hat{u}^n)$ are carried out in value space.

We present in this section four time-stepping algorithms for solving \reff{ODE}, and show how it is possible to achieve $\mathcal{O}(nm\log nm)$ complexity per time-step in most cases.
Two of them are exponential integrators based on the ETDRK4 scheme with different strategies for computing the matrix exponential and related functions, while the two others are IMEX schemes.
As before, we have observed numerically that these schemes combined with our spatial discretization preserve both the ``doubled-up'' symmetry \reff{DFS} 
and the pole conditions \reff{polecondition1}, i.e., if one starts with a smooth ``doubled-up'' initial condition, then the solution at time $t$ is also a smooth ``doubled-up'' function.

\subsection{Exponential integrators}

Dozens of exponential integration formulas of order four and higher have been proposed over the last 15 years~\cite{cox2002, hochbruck2005, hochbruck2010, krogstad2005, luan2014a, minchev2004, ostermann2006}.
The first author recently demonstrated~\cite{montanelli2016c} that it is hard to do much better than the ETDRK4 scheme of Cox and Matthews~\cite{cox2002}. The formula for this scheme is:
\begin{equation}
\begin{array}{l}
\hat{a}^n = \ph_0(h\Lbf/2)\hat{u}^n + (h/2)\ph_1(h\Lbf/2)\Nbf(\hat{u}^n), \\\\
\hat{b}^n = \ph_0(h\Lbf/2)\hat{u}^n + (h/2)\ph_1(h\Lbf/2)\Nbf(\hat{a}^n), \\\\
\hat{c}^n = \ph_0(h\Lbf/2)\hat{a}^n + (h/2)\ph_1(h\Lbf/2)\big[2\Nbf(\hat{b}^n) - \Nbf(\hat{u}^n)\big], \\\\
\hat{u}^{n+1} = \ph_0(h\Lbf)\hat{u}^n + hf_1(h\Lbf)\Nbf(\hat{u}^n) + hf_2(h\Lbf)\big[\Nbf(\hat{a}^n) + \Nbf(\hat{b}^n)\big] + hf_3(h\Lbf)\Nbf(\hat{c}^n),
\end{array}
\label{ETDRK4}
\end{equation}

\noindent where the $\ph$-functions are defined by
\begin{equation}
\begin{array}{l}
\ph_0(h\Lbf) = e^{h\Lbf}, \\\\
\ph_1(h\Lbf) = h^{-1}\Lbf^{-1}(e^{h\Lbf} - \Ibf), \\\\
\ph_2(h\Lbf) = h^{-2}\Lbf^{-2}(e^{h\Lbf} - h\Lbf - \Ibf), \\\\
\ph_3(h\Lbf) = h^{-3}\Lbf^{-3}(e^{h\Lbf} - h^2\Lbf^2/2 - h\Lbf - \Ibf),
\end{array}
\end{equation}

\noindent and the coefficients $f_1$, $f_2$ and $f_3$ are linear combinations of the $\ph$-functions,
\begin{equation}
\begin{array}{l}
f_1(h\Lbf) = \ph_1(h\Lbf) - 3\ph_2(h\Lbf)  + 4\ph_3(h\Lbf) = h^{-3}\Lbf^{-3}[-4\Ibf - h\Lbf + e^{h\Lbf}(4 - 3h\Lbf + (h\Lbf)^2)], \\\\
f_2(h\Lbf) = 2\ph_2(h\Lbf)  - 4\ph_3(h\Lbf) = 2h^{-3}\Lbf^{-3}[2\Ibf + h\Lbf + e^{h\Lbf}(-2\Ibf + h\Lbf)], \\\\
f_3(h\Lbf) = -\ph_2(h\Lbf)  + 4\ph_3(h\Lbf) =  h^{-3}\Lbf^{-3}[-4\Ibf - 3h\Lbf - (h\Lbf)^2 + e^{h\Lbf}(4\Ibf - h\Lbf)].
\end{array}
\end{equation}

When all the eigenvalues of $\Lbf$ are real (i.e., $\alpha\in\R$, $\alpha>0$ corresponding to a diffusive PDE), matrix-vector products $\ph_l(h\Lbf)v$ can be evaluated using rational approximations computed by the Carath\'{e}odory--Fej\'{e}r (CF) method, as described in~\cite{schmelzer2007}.
If $\Lbf$ has some imaginary eigenvalues---the extreme case being when all the eigenvalues are imaginary (i.e., $\alpha\in i\R$, dispersive PDE)---methods based on rational approximations necessarily become expensive, and the $\ph$-functions have to be precomputed before the time-stepping starts, e.g., using the eigenvalue decomposition of $\Lbf$ and contour integrals.\footnote{A comparison of methods for computing the $\ph$-functions can be found in~\cite{ashi2009}.}
We denote by ETDRK4-CF the method with CF approximations and by ETDRK4-EIG the method with eigenvalue decomposition.
We shall give details about the precomputation of the coefficients for both ETDRK4-CF and ETDRK4-EIG below.

Note that Du and Zhu computed in \cite{du2005} the stability region of \reff{ETDRK4} and showed that it includes parts of both the negative real axis and the imaginary axis.

\subsubsection{ETDRK4-CF}

When all the eigenvalues of $\Lbf$ are real, one can compute matrix-vector products $\ph_l(h\Lbf)v$ in $\mathcal{O}(mn)$ operations
using near-best rational approximations to the $\varphi$-functions on the negative real axis~\cite{schmelzer2007}. 
For an efficient implementation, we use the algorithm that uses common poles for approximating the exponential and other $\varphi$-functions~\cite{schmelzer2007}, as we summarize below. 
Using the CF method for the negative real line \cite{trefethen1983}, we obtain a rational approximant 
\begin{equation}
e^z\approx r_\infty + \sum_{j=1}^p \frac{c_j}{z-z_j}, 
\label{eq:cfapprox}
\end{equation}
  
\noindent which has error decaying like $\approx 9.28903^{-p}$ with a type $(p,p)$ function. 
(We use the MATLAB \texttt{cf} code of Trefethen, Weideman and Schmelzer~\cite{trefethen2006} in our experiments.)
To obtain an approximant to $\varphi_l(z)$ using the approximant~\eqref{eq:cfapprox} to $e^z=\varphi_0(z)$, we use the fact~\cite[Prop.~4.1]{schmelzer2007} that defining 
\begin{equation}
B_z=
\begin{pmatrix}
z&1\\0& 0  
\end{pmatrix}
\end{equation}

\noindent we have 
\setlength{\extrarowheight}{5pt}
\begin{equation}
\varphi_l(B_z)=
\begin{pmatrix}
\varphi_l(z)&\varphi_{l+1}(z)\\0& \varphi_l(0)  
\end{pmatrix}, \quad l\geq 0.
\end{equation}
\setlength{\extrarowheight}{0pt}

\noindent Together with the identity
\setlength{\extrarowheight}{5pt}
\begin{equation}
(B_z-z_jI)^{-1}=
\begin{pmatrix}
(z-z_j)^{-1}&(z-z_j)^{-1}z_j^{-1}
\\0& -z_j^{-1}
\end{pmatrix},
\end{equation}
\setlength{\extrarowheight}{0pt}

\noindent we obtain the approximation
\begin{equation}
\varphi_{l}(z)\approx \sum_{j=1}^p \frac{c_jz_j^{-l}}{z-z_j}, \quad l\geq 0.
\label{eq:phiapp}
\end{equation}

\noindent As suggested in \cite[Prop.~4.1]{schmelzer2007}, we further incorporate a shift $1$ in~\reff{eq:cfapprox}, which with $p=12$ (the default choice; the accuracy is $\approx 10^{-8}$ with $p=10$) gives accuracy $\approx 10^{-10}$ on the negative real axis for all $\ph_l$, $0\leq l\leq 3$. 
Given~\eqref{eq:cfapprox} and~\eqref{eq:phiapp}, evaluating $\ph_l(h\Lbf)b$ for a vector $b$ as in~\reff{ETDRK4} can be approximated as 
\begin{equation}
\ph_l(h\Lbf)b\approx \sum_{j=1}^p c_jz_j^{-l} (h\Lbf-z_j\Ibf)^{-1}b, \quad 0\leq l\leq 3,
\label{action}
\end{equation}

\noindent which reduces to $p=12$ shifted linear systems of the form~\eqref{eq:shiftlinsysL}, which we do with linear cost as described in Section~2.3.
In practice, we compute and store the LU factorizations of the matrices that appear in~\reff{action} before the time-stepping starts.
Note that computing different products $\ph_l(hL)v$ at once with the same $v$ requires no further linear systems. 

Let us emphasize three aspects of this approach.
First, it is not necessary to explicitly compute and store the $\ph$-functions; instead, their action on vectors is directly computed via \reff{action}.
Second, the most expensive operation in~\reff{ETDRK4}--\reff{action} is the 2D FFT, which costs $\mathcal{O}(nm\log nm)$ operations;
see Table~\ref{tab:costs}.
Third, this method is not applicable when $\Lbf$ has some imaginary eigenvalues; 
this is because low-degree rational functions are unable to approximate the exponential on the imaginary axis, which is oscillatory. 
In this case one has to compute and store the $\ph$-functions, and the complexity increases to $\mathcal{O}(nm^2)$ per time-step, as we describe next. 

\begin{figure}
\centering
\includegraphics[scale=.4]{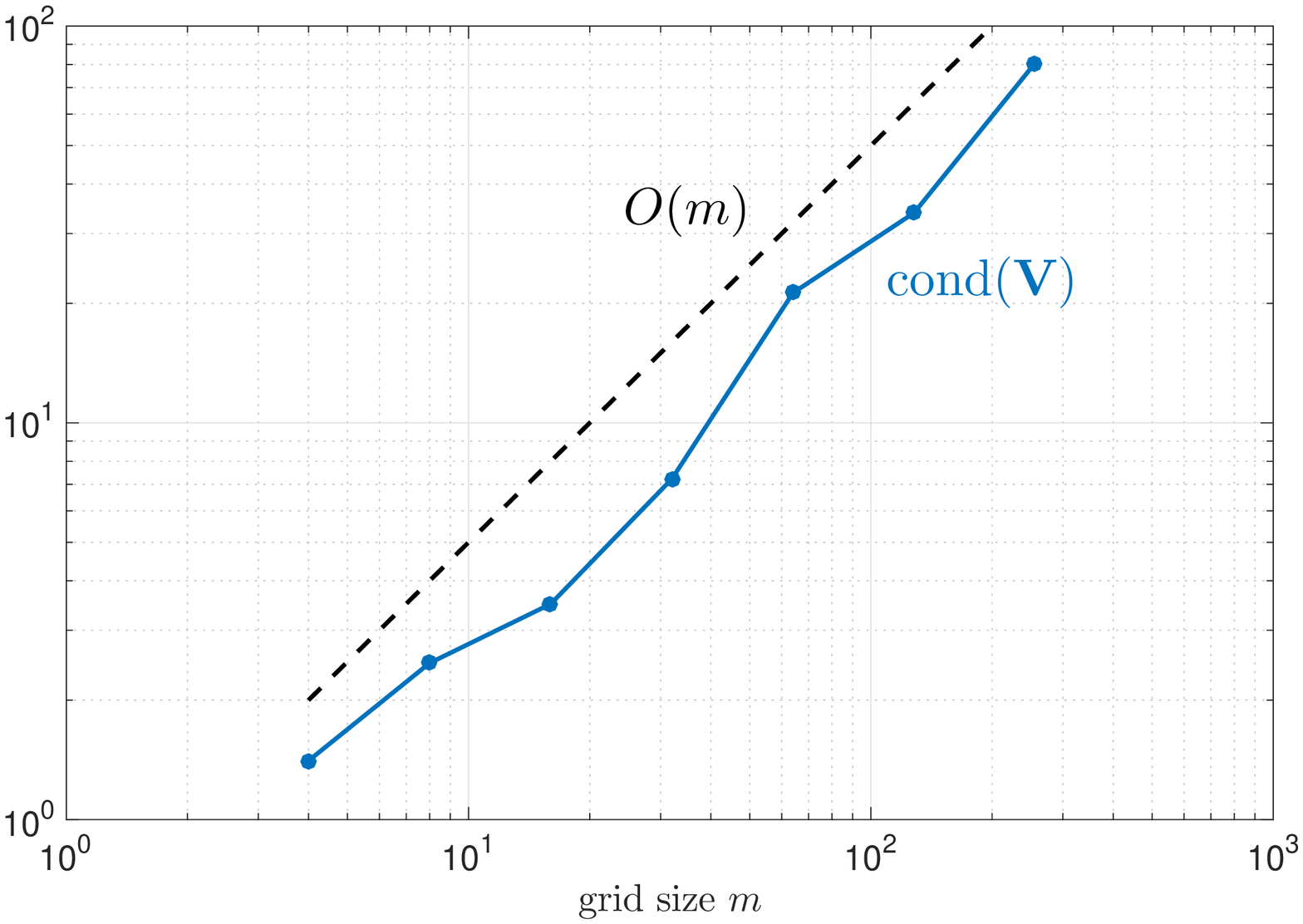}
\caption{\textit{Variation of $\mrm{cond}(\mathbf{V})$ with $m=n$. It is of order $m$ and reasonably small; therefore, an approach 
based on eigenvalues and eigenvectors for the computation of the $\ph$-functions is valid.}}
\label{fig:conds}
\end{figure}

\subsubsection{ETDRK4-EIG}

To compute the $\ph$-functions, one can use a method based on eigenvalues and eigenvectors. 
The idea is to diagonalize $\Lbf=\mathbf{V\Lambda V^{-1}}$ and then apply the $\varphi$-functions to the eigenvalues, 
\begin{equation}
\ph_l(h\Lbf)=\mathbf{V}\ph_l(h\mathbf{\Lambda})\mathbf{V^{-1}}, \quad 0\leq l \leq 3,
\label{EIG}
\end{equation}

\noindent with 
\begin{equation}
\ph_l(h\mathbf{\Lambda}) = 
\begin{pmatrix}
\ph_l(h\lambda_1) \\
& \ph_l(h\lambda_2) \\
& & \ddots \\ 
& & & \ph_l(h\lambda_{nm})\\
\end{pmatrix}.
\end{equation}

\noindent For a general $nm \times nm$ matrix $\Lbf$, this would require $\mathcal{O}(n^3m^3)$ operations, but for~\reff{Laplacian}, this can be done blockwise in $\mathcal{O}(nm^3)$ operations.
Note that this corresponds to Method 14 of \cite{moler2003}.
Theoretically, this approach only works when $\mathcal{L}$ is nondefective, that is, when it has a complete set of linearly independent
eigenfunctions---this is a well known result for the Laplacian operator on the sphere~\cite{atkinson2012}.
In practice, difficulties occur when the discretizaion $\Lbf$ is ``nearly'' defective, i.e., when $\mrm{cond}(\mathbf{V})=||\mathbf{V}||\,||\mathbf{V}^{-1}||$ is large.
Fortunately, we have observed numerically that the condition number is small and of order $m$ as $m=n$ increases; see Figure \ref{fig:conds}.

Once we have computed the eigenvalue decomposition, we follow the idea of Kassam and Trefethen~\cite{kassam2005} and evaluate the $\varphi$-functions at each scaled eigenvalue $h\lambda$ using Cauchy's integral formula,
\begin{equation}
\ph_l(h\lambda) = \frac{1}{2\pi i}\oint_\Gamma \frac{\ph_l(z)}{z - h\lambda} dz \approx \frac{1}{M} \sum_{k=1}^M \ph_l\big(h\lambda + e^{2\pi i (k-0.5)/M}\big), \quad 0\leq l\leq 3.
\label{contour}
\end{equation}

\noindent The constant $M$ is the number of points in the discretized contour integration; we take $M=32$ in our experiments.

The precomputation step costs $\mathcal{O}(nm^3)$ operations, while the cost per time-step is $\mathcal{O}(nm^2)$ since one has to compute block diagonal matrix-vector products $\ph_l(h\Lbf)v$; see Table~\ref{tab:costs}.

\subsection{Implicit-explicit schemes}

We present in this section the two IMEX schemes we consider in this paper. 
The first one, IMEX-BDF4~\cite{ascher1995}, is a multistep scheme which is stable only for diffusive PDEs.
The second one, LIRK4~\cite{calvo2001}, is a one-step scheme, stable for both diffusive and dispersive PDEs.

\subsubsection{IMEX-BDF4}

Following Kassam and Trefethen~\cite{kassam2005}, we consider a scheme known either as SBDF4 (in \cite{ascher1995}), AB4BD4 (in \cite{cox2002}) or IMEX-BDF4 (in \cite{hundsdorfer2007}), which combines a fourth-order Adams--Bashforth formula and a fourth-order backward differentiation scheme in a nontrivial way. The method is given by:
\begin{equation}
\begin{array}{l}
(25\Ibf_{nm} - 12h\Lbf)\hat{u}^{n+1} = 48\hat{u}^{n} - 36\hat{u}^{n-1} + 16\hat{u}^{n-2} - 3\hat{u}^{n-3} + 48h\Nbf(\hat{u}^{n}) - 72h\Nbf(\hat{u}^{n-1})\\\\
\hspace{4cm}  + \; 48h\Nbf(\hat{u}^{n-2}) - 12h\Nbf(\hat{u}^{n-3}).
\end{array}
\label{IMEX-BDF4}
\end{equation}

\noindent At each time-step, one has to solve a linear system to get the Fourier coefficients $\hat{u}^{n+1}$,
which we do with linear cost by multiplying each block of~\reff{IMEX-BDF4} by $\Tbf_{\sin^2}$, as explained in Section 2.3. 
Therefore, the dominant cost in~\reff{IMEX-BDF4} is the $\mathcal{O}(nm\log nm)$  2D FFT for the nonlinear evaluations;
see Table~\ref{tab:costs} at the end of this section.

Let us add three comments about \reff{IMEX-BDF4}.
First, the LU factorization of the left-hand side of \reff{IMEX-BDF4} is computed and stored before the time-stepping starts.
Second, this is a multistep formula so it has to be started with a one-step scheme---in the numerical comparisons of Section~4, we initialize it with three steps of ETDRK4-CF.
Third, it is unstable for dispersive PDEs since the stability region of the fourth-order backward differentiation formula does not contain the portion of the imaginary axis near the origin.
In these cases, one can use IMEX Runge--Kutta schemes~\cite{calvo2001, kennedy2003, pareschi2005} or extrapolation-based
IMEX schemes~\cite{cardone2014, constantinescu2010}. We have decided to focus on the former.

\begin{table}
\caption{\textit{Computational costs (per time-step) of the time-stepping algorithms with $p=12$ in the CF approximation.
The IMEX-BDF$\,4$ scheme is particularly cheap while for ETDRK$\,4$-CF one needs to solve an extremely large number of linear systems.
ETDRK$\,4$-EIG is the only scheme that has a $\mathcal{O}(nm^2)$ cost per time-step and a $\mathcal{O}(nm^3)$ precomputation step.
Precomputations for the other schemes (LU factorizations) cost $\mathcal{O}(nm)$ operations.}}
\vspace{.5cm}
\centering
\ra{1.3}
\begin{tabular}{lccccc}
\hline
& \multicolumn{2}{c}{\textbf{ETDRK4}} & \phantom{a} & \multicolumn{2}{c}{\textbf{IMEX}}\\
\cmidrule{2-3} \cmidrule{5-6} 
& \textbf{CF} & \textbf{EIG} && \textbf{BDF4} & \textbf{LIRK4}\\
\midrule
\textbf{\# $\mathcal{O}(nm\log nm)$ FFTs} & 8 & 8 && 2 & 12\\
\textbf{\# $\mathcal{O}(nm)$ linear solves} & $9p=108$ & 0 && 1 & 5\\
\textbf{\# $\mathcal{O}(nm^2)$ matrix-vector products} & 0 & 9 && 0 & 0\\
\textbf{diffusive PDEs} & \checkmark & \checkmark && \checkmark & \checkmark\\
\textbf{dispersive PDEs} & $\times$ & \checkmark && $\times$ & \checkmark\\
\bottomrule
\end{tabular}
\label{tab:costs}
\end{table}

\subsubsection{LIRK4}

IMEX Runge--Kutta schemes combine explicit Runge--Kutta formulas to adavance the nonlinear part and implicit Runge--Kutta to advance the linear part~\cite{calvo2001, kennedy2003, pareschi2005}. 
In this paper, we use the fourth-order LIRK4 scheme of Calvo, de Frutos and Novo~\cite{calvo2001}.
It combines an implicit $L$-stable five-stage fourth-order Runge--Kutta method, whose Butcher tableau is given in \cite[Table~6.5]{hairer1991},
with a six-stage fourth-order explicit Runge--Kutta method, whose coefficients were derived in~\cite{calvo2001}. The formula for this scheme is:
\setlength{\extrarowheight}{5pt}
\begin{equation}
\begin{array}{l}
(\Ibf_{nm} - \frac{1}{4}h \Lbf)\hat{a}^n = \hat{u}^n + \frac{1}{4}h\Nbf(\hat{u}^n), \\\\

(\Ibf_{nm} - \frac{1}{4}h \Lbf)\hat{b}^n = \hat{u}^n + \frac{1}{2}h\Lbf\hat{a}^n - \frac{1}{4}h\Nbf(\hat{u}^n) + h\Nbf(\hat{a}^n), \\\\

(\Ibf_{nm} - \frac{1}{4}h \Lbf)\hat{c}^n = \hat{u}^n + \frac{17}{50}h\Lbf\hat{a}^n - \frac{1}{25}h\Lbf\hat{b}^n - \frac{13}{100}h\Nbf(\hat{u}^n) + \frac{43}{75}h\Nbf(\hat{a}^n) + \frac{8}{75}h\Nbf(\hat{b}^n), \\\\

(\Ibf_{nm} - \frac{1}{4}h \Lbf)\hat{d}^n = \hat{u}^n + \frac{371}{1360}h\Lbf\hat{a}^n - \frac{137}{2720}h\Lbf\hat{b}^n + \frac{15}{544}h\Lbf\hat{c}^n - \frac{6}{85}h\Nbf(\hat{u}^n) + \frac{42}{85}h\Nbf(\hat{a}^n), \\\\
\hspace{3.05cm} + \, \frac{179}{1360}h\Nbf(\hat{b}^n) - \frac{15}{272}h\Nbf(\hat{c}^n), \\\\

(\Ibf_{nm} - \frac{1}{4}h \Lbf)\hat{e}^n = \hat{u}^n + \frac{25}{24}h\Lbf\hat{a}^n - \frac{49}{48}h\Lbf\hat{b}^n + \frac{125}{16}h\Lbf\hat{c}^n - \frac{85}{12}h\Lbf\hat{d}^n + \frac{79}{24}h\Nbf(\hat{a}^n) - \frac{5}{8}h\Nbf(\hat{b}^n) \\
\hspace{3.05cm} + \, \frac{25}{2}h\Nbf(\hat{c}^n) - \frac{85}{6}h\Nbf(\hat{d}^n), \\\\

\hat{u}^{n+1} = \hat{u}^n + \frac{25}{24}h\Lbf\hat{a}^n - \frac{49}{48}h\Lbf\hat{b}^n + \frac{125}{16}h\Lbf\hat{c}^n - \frac{85}{12}h\Lbf\hat{d}^n + \frac{1}{4}h\Lbf\hat{e}^n + \frac{25}{24}h\Nbf(\hat{a}^n) - \frac{49}{48}h\Nbf(\hat{b}^n) \\\\
\hspace{1.59cm} + \, \frac{125}{16}h\Nbf(\hat{c}^n) - \frac{85}{12}h\Nbf(\hat{d}^n) + \frac{1}{4}h\Nbf(\hat{e}^n).
\end{array}
\label{LIRK4}
\end{equation}
\setlength{\extrarowheight}{0pt}

The LU factorization of the left-hand sides of~\reff{LIRK4} is computed and stored before the time-stepping starts.
Note that the most expensive operations in the computation of the internal stages $\hat{a}^n$, $\hat{b}^n$, $\hat{c}^n$, $\hat{d}^n$ and $\hat{e}^n$ are the nonlinear evaluations ($\mathcal{O}(nm \log nm)$ work).
The other operations, i.e., linear solves and matrix-vector products in the right-hand side (each block being multiplied by $\Tbf_{\sin^2}$), can be carried out in linear time.
The computation of $\hat{u}^{n+1}$ from $\hat{u}^n$ requires matrix-vector products of the form $\Lbf v$, which reduce to $n$ dense matrix-vector products $\Lbf_iv$; each can be done in $\mathcal{O}(m)$ operations since $\Lbf_i v=\Tbf_{\sin^2}^{-1}(\Tbf_{\sin^2}\Lbf_i)v$. (The LU factorization of $\Tbf_{\sin^2}$ is also computed and stored before the time-stepping starts.)

\section{Numerical comparisons}

\subsection{Methodology}

To compare time-stepping schemes, we follow the methodology of~\cite{kassam2005}.
We solve a given PDE up to $t=T$ for various time-steps $h$ and a fixed number of grid points.
We estimate the ``exact'' solution $u^{ex}(t=T,\lambda,\theta)$ by using a very small time-step (half the smallest time-step $h$) and ETDRK4-EIG (the most accurate time-stepping scheme).
We then measure the relative $L^2$-error $E$ at $t=T$ between the computed solution $u(t=T,\lambda,\theta)$ and $u^{ex}(t=T,\lambda,\theta)$,
\begin{equation}
E = \frac{\Vert u(t=T,\lambda,\theta) - u^{ex}(t=T,\lambda,\theta)\Vert_2}{\Vert u^{ex}(t=T,\lambda,\theta)\Vert_2}.
\label{error_PDE}
\end{equation}

\noindent For both $u$ and $u^{ex}$ we use $m=n=256$ grid points.
(With these grid sizes, the error due to the spatial discretization is small compared to the error due to the time discretization.)
We use $p=12$ in the CF approximation for ETDRK4-CF and $M=32$ points to compute the contour integrals~\reff{contour} for ETDRK4-EIG.
We plot \reff{error_PDE} against relative time-steps $h/T$ and computer times on a pair of graphs.\footnote{The precomputation of the coefficients of the exponential integrators, the LU factorizations for the
IMEX schemes and the starting phase of IMEX-BDF4 are not included in the computing time. 
Timings were done on a 2.8\,GHz Intel i7 machine with 16\,GB of RAM using MATLAB R2015b and Chebfun v5.6.0.}
The former gives a measure of the accuracy of the time-stepping scheme for various time-steps or, equivalently, for various number of integration steps.
(If the relative time-step is $10^{-3}$, it means that the scheme performed $10^3$ steps to reach $t=T$.) 
However, it is possible that each step is more costly, so it is the latter that ultimately matters. 

\subsection{Results for the diffusive case}

The \textit{Allen--Cahn equation}, derived by Allen and Cahn in the 1970s, is a reaction-diffusion equation which
describes the process of phase separation in iron alloys~\cite{allen1979}, studied in the ball and on the sphere in, e.g.,~\cite{du2008}.
It is given by
\begin{equation}
u_t = \epsilon\Delta u + u - u^3, \quad \epsilon\ll1,
\label{eq:AC}
\end{equation}

\noindent with linear diffusion $\epsilon\Delta u$ and a cubic reaction term $u-u^3$.
The function $u$ is the order parameter, a correlation function related to the positions of the different components of the alloy.
The Allen--Cahn equation exhibits stable equilibria at $u=\pm 1$, while $u=0$ is an unstable equilibrium. 
Solutions often display metastability where wells $u\approx -1$ compete with peaks $u\approx 1$,
and structures remain almost unchanged for long periods of time before changing suddenly.
\noindent We take $\epsilon=10^{-2}$ and
\begin{equation}
u(t=0,x,y,z) =  \cos(\cosh(5xz)-10y),
\label{eq:ACIC}
\end{equation}

\noindent and solve up to $t=10$. The initial condition and the solution at times $t=1,2,10$ are shown in Figure~\ref{fig:ACsol}.
The initial condition quickly converges to a metastable $u\approx\pm1$ solution (at around $t=10$) and eventually to the stable constant solution $u=1$
(at around $t=60$).

\begin{figure}
\hspace{-1.1cm}
\includegraphics[scale=.59]{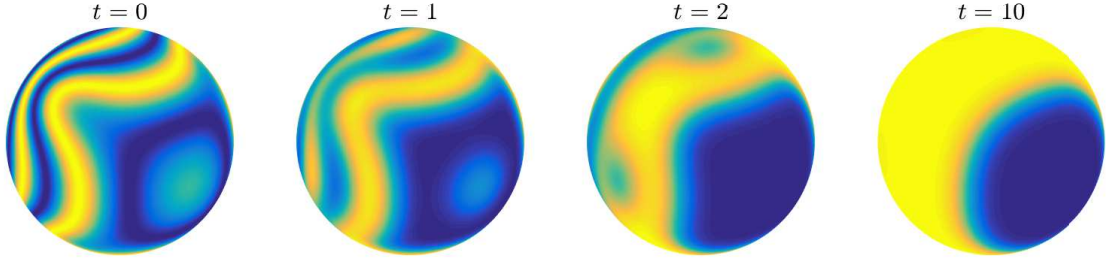}
\vspace{-9.8cm}
\caption{\textit{Initial condition~$\reff{eq:ACIC}$ and solution at times $t=1,2,10$ of the Allen--Cahn equation~$\reff{eq:AC}$.}}
\label{fig:ACsol}
\end{figure}

\begin{figure}
\hspace{-.9cm}
\includegraphics[scale=.45]{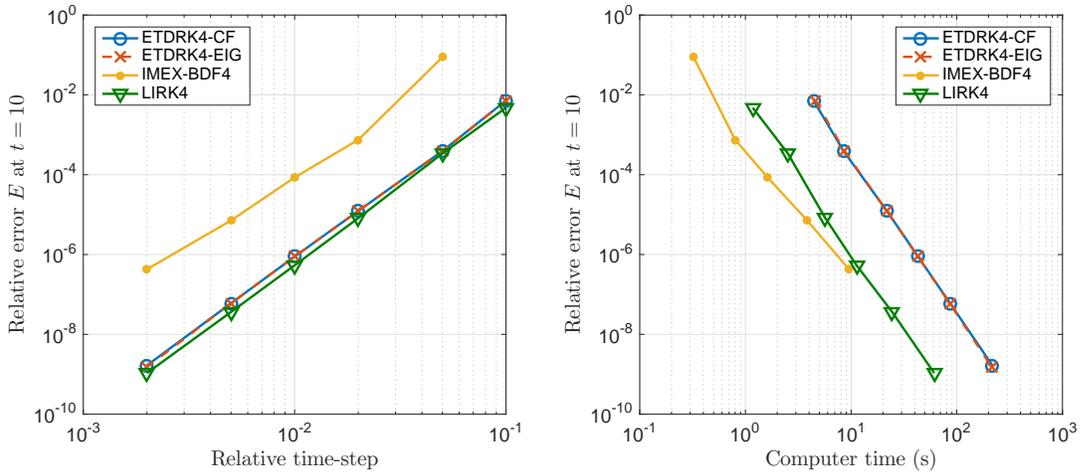}
\caption{\textit{Relative error $E$ at $t=10$ versus relative time-step and computer time for the Allen--Cahn equation~$\reff{eq:AC}$.}}
\label{fig:ACcomp}
\end{figure}

The results are shown in Figure~\ref{fig:ACcomp}. 
All the schemes are stable for the time-steps we have considered, except IMEX-BDF4 for the largest time-step.
The ETDRK4 schemes and LIRK4 have similar accuracy, while IMEX-BDF4 is significantly less accurate.
However, IMEX-BDF4 is the most efficient scheme.
This can be explained by looking at Table~\ref{tab:costs}: IMEX-BDF4 requires very few operations per time-step.
Note that in this experiment, ETDRK4-CF ($\mathcal{O}(nm\log nm)$ work per time-step) is not more efficient than ETDRK4-EIG ($\mathcal{O}(nm^2)$).
Again, the reason can be found in Table~\ref{tab:costs}: ETDRK4-CF requires the solution of 108 linear systems per time-step.
(For $m=n$ sufficiently large, ETDRK4-CF will be indeed more efficient than ETDRK4-EIG.)

\subsection{Results for the dispersive case}

The \textit{nonlinear Schr\"odinger (NLS) equation},
\begin{equation}
u_t = i\Delta u + i\vert u\vert^2u,
\label{eq:NLS}
\end{equation}

\noindent models a variety of physical phenomena, including the propagation of light in optical fibres; on the sphere, a recent theoretical study of its solutions can be found in~\cite{takaoka2016}.
A nonlinear variant of the Schr\"odinger equation, it couples dispersion $i\Delta u$ with a nonlinear potential $i\vert u\vert^2u$. 
Note that the wave function $u$ is complex-valued.
\noindent We take 
\begin{equation}
u(t=0,\lambda,\theta) = A\bigg(\frac{2B^2}{2-\sqrt{2}\sqrt{2-B^2}\cos(AB\theta)}-1\bigg) + Y_3^3(\lambda,\theta)
\label{eq:NLSIC}
\end{equation}

\noindent with $A=B=1$ and solve up to $t=1$. 
The initial condition and the real part of the solution at times $t=0.3,0.6,1$ are shown in Figure~\reff{fig:NLSsol}.
The initial condition is the superposition of two nonlinear waves in which energy concentrates in a localized and oscillatory fashion, 
a \textit{breather} and a spherical harmonic.

\begin{figure}
\hspace{-1.1cm}
\includegraphics[scale=.57]{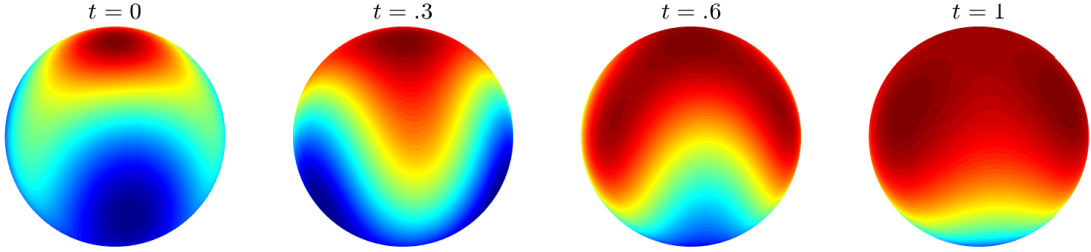}
\vspace{-9.1cm}
\caption{\textit{Initial condition~$\reff{eq:NLSIC}$ and real part of the solution at times $t=0.3,0.6,1$ of the NLS equation~$\reff{eq:NLS}$.}}
\label{fig:NLSsol}
\end{figure}

\begin{figure}
\hspace{-.9cm}
\includegraphics[scale=.45]{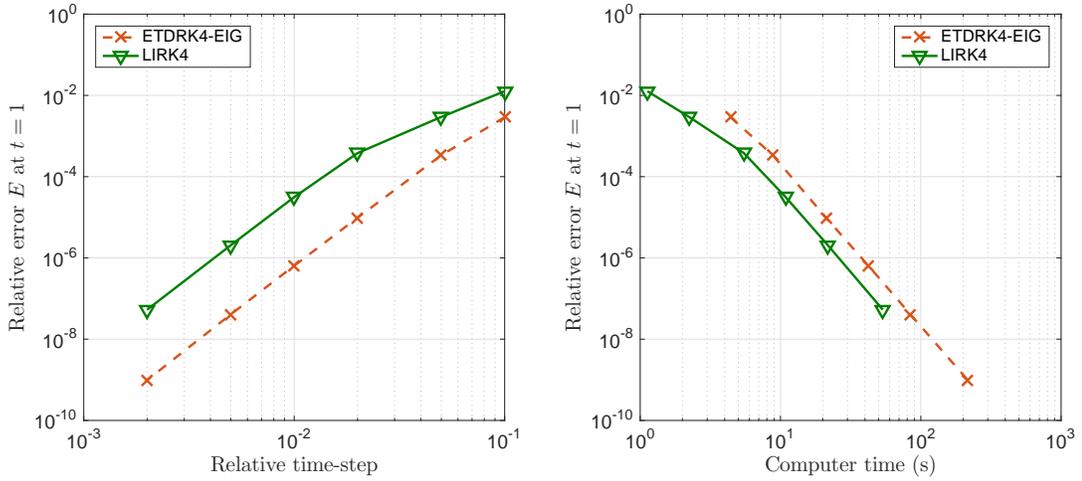}
\caption{\textit{Relative error $E$ at $t=1$ versus relative time-step and computer time for the NLS equation.}}
\label{fig:NLScomp}
\end{figure}

The results are shown in Figure~\ref{fig:NLScomp}. 
Both schemes are stable for the time-steps we have considered.
LIRK4 is less accurate than ETDRK4-EIG in this case, but since it has a much lower cost per time-step, it is more efficient.
Let us emphasize that timings do not include the precomputation step, which takes significantly longer for ETDRK4-EIG ($\mathcal{O}(nm^3)$ versus $\mathcal{O}(nm)$).
Also, since LIRK4 and ETDRK4-EIG have different scaling in $m=n$, the results ultimately depend on the size of the discretization.
For example, for $m=n=128$ instead of $256$, ETDRK4-EIG is slightly more efficient.

\section{Discussion}

We have presented algorithms for solving stiff PDEs on the sphere with spectral accuracy in space and fourth-order in time. 
For the spatial discretization, we have used a variant of the DFS method in coefficient space.
The main advantages of our method are that it is not necessary to numerically impose the pole conditions~\reff{polecondition1}, 
while operating in coefficient space avoids the coordinate singularity without using shifted grids and leads to matrices $\Lbf$ that can be computed and inverted efficiently.
We have tested our method with the Poisson and heat equations and obtained excellent results.

For solving nonlinear time-dependent PDEs, we have used IMEX schemes and exponential integrators to circumvent the time-stepping restrictions due to the large eigenvalues of the Laplacian matrix.
For diagonal problems, exponential integrators are particularly efficient since the computation and the action of the matrix exponential can trivially be computed in linear time.
For problems that allow for fast numerical linear algebra (fast sparse direct solver), as in this paper, we have given numerical evidence that IMEX schemes outperform exponential integrators. 
The IMEX-BDF4 time-stepping scheme is remarkably efficient for diffusive PDEs but since it is unstable for dispersive PDEs and needs to be started with another algorithm, 
it might be preferable to use LIRK4 for both diffusive and dispersive PDEs. 
For problems that generate dense matrices $\Lbf$, it is not clear which one would perform best. 
On a grid with $N$ points, both exponential integrators (dense matrix-vector products) and IMEX schemes (triangular systems from precomputed LU factorizations) would have a $\mathcal{O}(N^2)$ cost per time-step.
Note that dense matrices correspond to, e.g., the DFS method applied to \reff{PDE} with highly oscillatory variable coefficients or a Chebyshev discretization in value space of PDEs of the form
\reff{PDE} in 1D/2D/3D.\;
However, as recently shown by Aurentz in~\cite{aurentz2015}, it seems that all spectral differentiation matrices (even the dense ones!) might allow for fast numerical linear algebra, which would render IMEX schemes more efficient for value-based Chebyshev discretizations---this is a story to be continued.
We summarize these observations in Table~\ref{tab:comparisons}.

\begin{table}
\caption{\textit{Computation costs per time-step for a grid with $N$ points and most efficient time-stepping scheme in each case.
Diagonal problems were considered in}~\cite{kassam2005}. 
\textit{In this paper, we investigated non-diagonal problems that allow for fast numerical linear algebra (i.e., linear systems can be solved in linear time).
In the latter case, IMEX schemes outperform exponential integrators. 
It is not clear which scheme would perform best in the dense numerical linear algebra case.}}
\vspace{.5cm}
\centering
\ra{1.3}
\begin{tabular}{ccc}
\toprule
& \textbf{diffusive PDEs} & \textbf{dispersive PDEs}\\
\midrule
\textbf{diagonal problems} & $\mathcal{O}(N\log N)$ & $\mathcal{O}(N\log N)$\\
& ETDRK4 & ETDRK4\\\\
\textbf{non-diagonal problems} & $\mathcal{O}(N\log N)$ & $\mathcal{O}(N\log N)$\\
\textbf{fast sparse direct solver} & IMEX-BDF4 & LIRK4\\\\
\textbf{non-diagonal problems} & $\mathcal{O}(N^2)$ & $\mathcal{O}(N^2)$\\
\textbf{dense solver} & TBD & TBD\\
\bottomrule
\end{tabular}
\label{tab:comparisons}
\end{table}

We have not considered the ``sliders'' of Fornberg and Driscoll~\cite{driscoll2002b, fornberg1999}. 
While this can be an efficient alternative to IMEX schemes and exponential integrators for diagonal problems~\cite{kassam2005}, it is not clear how it can be extended to non-diagonal ones.

Our method can in principle deal with nonsmooth initial conditions for diffusive problems, as long as the diffusion is strong enough to smooth out the solution.
Also, as we mentioned in the introduction, our method could be applied to more general PDEs, including linear operators consisting of powers of the Laplacian operator. 
These would have a larger bandwidth, but could still be inverted efficiently. Therefore, we believe that the same conclusions would hold. 
Analogues of the matrices $\Pbf$ and $\Qbf$ would be involved. 

Future directions include the application of our method to hyperbolic problems, e.g., 
the barotropic vorticity equation or the shallow water equations.
Such problems involve nonlinear differential operators with large eigenvalues.
While stiffness in the linear part (as in the Allen--Cahn and NLS equations) can be treated by using IMEX schemes or exponential integrators, it is not obvious how to deal with a stiff nonlinear operator.
For the barotropic vorticity equation,
\begin{equation}
u_t = \mathcal{N}(u) = -\frac{(\Delta^{-1} u)_\theta}{\sin\theta}u_\lambda + \frac{(\Delta^{-1} u)_\lambda}{\sin\theta}(u_\theta - 2\Omega\sin\theta),
\end{equation}

\noindent a possible approach would be to use the EPIRK schemes~\cite{tokman2006}, e.g., the EPIRK2 scheme given by
\begin{equation*}
\hat{u}^{n+1} = \hat{u}^n + \mathbf{J}^{-1}(e^{h\mathbf{J}} - \Ibf)\Nbf(\hat{u}^n), \quad \mathbf{J} = \frac{d\Nbf}{d\hat{u}}(\hat{u}),
\end{equation*}

\noindent with Arnoldi iteration for the matrix-vector products involving the Jacobian matrix $\mathbf{J}$ of the discretization $\Nbf$ of $\mathcal{N}$ in Fourier space---the cost per time-step would also be $\mathcal{O}(N\log N)$ operations.

\begin{appendix}
\section{Eigenvalues of the Laplacian matrix}
The Laplacian matrix~\reff{Laplacian} is a discretized Laplacian, so one might expect that the eigenvalues are all real and nonpositive. 
For example, Gottlieb and Lustman~\cite{gottlieb1983} give a nontrivial proof that the discretization of the second derivative operator in the Chebyshev collocation method on a real interval with separated boundary conditions has real and negative eigenvalues. 
Here we show that essentially the same holds on the sphere for~\reff{Laplacian} (a slight difference is that we have one zero eigenvalue since the Laplacian of a constant is zero). 

\begin{theorem}\label{thm:eigreal}
The eigenvalues of $\Lbf$ in~\eqref{Laplacian} are all real and nonpositive. 
\end{theorem}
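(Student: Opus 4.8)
The plan is to exploit the block-diagonal structure of $\Lbf$ noted after~\reff{Laplacian}: since $\Lbf = \Ibf_n\otimes(\Dbf_m^{(2)} + \Tbf_{\sin^2}^{-1}\Tbf_{\cos\sin}\Dbf_m) + \Dbf_n^{(2)}\otimes\Tbf_{\sin^2}^{-1}$ is block diagonal with blocks $\Lbf_i = \Dbf_m^{(2)} + \Tbf_{\sin^2}^{-1}\Tbf_{\cos\sin}\Dbf_m + \Dbf_n^{(2)}(i,i)\Tbf_{\sin^2}^{-1}$, and each diagonal entry $\Dbf_n^{(2)}(i,i) = -k^2 \le 0$ is real, it suffices to show each $\Lbf_i$ has real nonpositive eigenvalues. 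First I would multiply $\Lbf_i$ on the left by $\Tbf_{\sin^2}$ (which is nonsingular by the Gershgorin argument in Section~2.2), reducing the eigenproblem $\Lbf_i x = \mu x$ to the generalized eigenproblem $\Tbf_{\sin^2}\Lbf_i x = \mu\,\Tbf_{\sin^2} x$, i.e. $(\Tbf_{\sin^2}\Dbf_m^{(2)} + \Tbf_{\cos\sin}\Dbf_m - k^2\Ibf_m)x = \mu\,\Tbf_{\sin^2}x$. The key point is that this should be a \emph{symmetric} generalized eigenproblem with $\Tbf_{\sin^2}$ positive definite: that would immediately give real eigenvalues, and then a separate sign argument gives nonpositivity.

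Concretely, the key steps are: (i) verify that $\Tbf_{\sin^2}$, viewed as multiplication by $\sin^2\theta$ in the symmetric Fourier basis $\{e^{ij\theta}\}_{|j|\le m/2}$ with the endpoint-halving convention~\reff{vec1}, is Hermitian positive definite — this is natural because $\sin^2\theta \ge 0$ and the relevant inner product is the $L^2$ inner product on $[-\pi,\pi]$ restricted to trigonometric polynomials of degree $\le m/2$ (one must handle the aliasing at $j=\pm m/2$ carefully, which is exactly why the $\Pbf,\Qbf$ bookkeeping of Section~2.2 matters); (ii) verify that $M := \Tbf_{\sin^2}\Dbf_m^{(2)} + \Tbf_{\cos\sin}\Dbf_m$ is Hermitian — this is the discrete analogue of the fact that $(\sin\theta\,(\sin\theta\,u_\theta)_\theta)$ equals $\sin^2\theta\,u_{\theta\theta} + \sin\theta\cos\theta\,u_\theta = (\sin^2\theta\,u_\theta)_\theta$, which is formally self-adjoint with respect to the plain $L^2$ weight; (iii) conclude from (i)–(ii) that $M - k^2\Tbf_{\sin^2}$ is Hermitian and $\Tbf_{\sin^2}$ HPD, so $\mu\in\R$; (iv) for the sign, note $\langle (M-k^2\Tbf_{\sin^2})x, x\rangle$ corresponds (up to the discretization) to $-\int \sin^2\theta\,|u_\theta|^2 - k^2\int \sin^2\theta\,|u|^2 \le 0$ while $\langle \Tbf_{\sin^2}x,x\rangle \ge 0$, forcing $\mu\le 0$; (v) finally, translate the one zero eigenvalue (constant functions, $k=0$, $j=0$ mode) into the claim that the eigenvalues are nonpositive rather than strictly negative.

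The main obstacle will be step (ii) — establishing the \emph{discrete} self-adjointness of $M = \Tbf_{\sin^2}\Dbf_m^{(2)} + \Tbf_{\cos\sin}\Dbf_m$ exactly, not just up to the smoothing/aliasing errors. The differentiation matrices $\Dbf_m$ and $\Dbf_m^{(2)}$ carry the endpoint peculiarity (the zeroed top mode in~\reff{diffmat}, versus the genuine $\Dbf_{m+1}$ of~\reff{Dodd}), and the multiplication matrices $\Tbf_{\sin^2},\Tbf_{\cos\sin}$ are built by padding, multiplying with $\Mbf_{\sin^2}$ resp.\ $\Mbf_{\cos\sin}$, then truncating via $\Qbf$; the self-adjointness identity must survive all this. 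I expect the cleanest route is to lift everything to the $(m+1)$- or slightly-padded-dimensional space via $\Pbf$ and work with the symmetric, unaliased matrices $\Dbf_{m+1}$, $\Mbf_{\sin^2}$, $\Mbf_{\cos\sin}$, where $\Mbf_{\sin^2}$ is visibly symmetric, $\Mbf_{\cos\sin}$ is visibly skew-Hermitian (note the $\pm i/4$ pattern), and $\Dbf_{m+1}$ is skew-Hermitian, so that products like $\Mbf_{\sin^2}\Dbf_{m+1}^2$ and $\Mbf_{\cos\sin}\Dbf_{m+1}$ combine into something Hermitian by inspection; then push the conclusion back down through $\Qbf = \mathrm{diag}(2,1,\dots,1)\Pbf^T\,(\text{up to padding})^{\!\top}$, using the relation between $\Qbf$ and $\Pbf^T$ exhibited in Section~2.6. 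If the exact discrete identity proves too delicate, a fallback is to note that $\Lbf_i$ is similar (via $\Tbf_{\sin^2}^{1/2}$) to $\Tbf_{\sin^2}^{-1/2}M\Tbf_{\sin^2}^{-1/2} - k^2\Ibf_m$ and argue Hermiticity of $\Tbf_{\sin^2}^{-1/2}M\Tbf_{\sin^2}^{-1/2}$ directly.
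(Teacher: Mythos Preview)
Your reduction to the block pencil $(\Tbf_{\sin^2}\Dbf_m^{(2)} + \Tbf_{\cos\sin}\Dbf_m - k^2\Ibf_m)x = \mu\,\Tbf_{\sin^2}x$ is exactly the right starting point, and it is also what the paper does. But steps (i) and (ii) fail as stated: neither $\Tbf_{\sin^2}$ nor $M=\Tbf_{\sin^2}\Dbf_m^{(2)}+\Tbf_{\cos\sin}\Dbf_m$ is Hermitian. Look at~\reff{Tsin2_b}: the $(1,3)$ entry is $-\tfrac14$ while the $(3,1)$ entry is $-\tfrac18$, precisely because of the $\Pbf,\Qbf$ bookkeeping that halves the $\pm m/2$ mode. (In Section~2.6 the paper writes $\Tbf_{\sin^2}=\mrm{diag}(\sqrt2,1,\ldots,1)\,\tilde\Qbf\Mbf_{\sin^2}\tilde\Qbf^T\,\mrm{diag}(\tfrac{1}{\sqrt2},1,\ldots,1)$, so $\Tbf_{\sin^2}$ is only \emph{diagonally similar} to a symmetric PD matrix, not symmetric itself.) For $M$, take $m=6$: one computes $M_{3,1}=(-\tfrac18)(-9)+(-\tfrac{i}{8})\cdot 0=\tfrac98$ but $M_{1,3}=(-\tfrac14)(-1)+(\tfrac{i}{4})(-i)=\tfrac12$, so $M\neq M^*$. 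Your fallback via $\Tbf_{\sin^2}^{1/2}$ does not rescue this: even after conjugating by $\Dbf=\mrm{diag}(\sqrt2,1,\ldots,1)$ so that $\Dbf^{-1}\Tbf_{\sin^2}\Dbf$ is symmetric, the product $(\Dbf^{-1}\Tbf_{\sin^2}\Dbf)\Dbf_m^{(2)}$ of a symmetric matrix with a non-scalar diagonal is not symmetric unless they commute, and they do not. (Incidentally, $\Mbf_{\cos\sin}$ is Hermitian, not skew-Hermitian: it is $i$ times a real skew-symmetric matrix.)

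The paper's proof accepts that the pencil is not Hermitian-definite and argues differently. For each fixed $\lambda_0\le 0$ it studies the ordinary matrix $\Cbf(\lambda_0)=\Abf-\lambda_0\Bbf$, permutes rows/columns to decouple the even and odd $\theta$-modes into two half-size blocks (one tridiagonal, one tridiagonal with two corner entries), and shows each block is \emph{diagonally similar} to a symmetric matrix using the sign pattern $\beta_j\beta_{\ell-j-1}>0$ of neighboring off-diagonals; the corner block needs a separate lemma exploiting its anti-diagonal symmetry. This gives real $f_j(\lambda_0)=\lambda_j(\Cbf(\lambda_0))$; then $f_j(0)\le 0$ (via a Gershgorin argument after deflating the zero mode) and $f_j(\lambda_0)\to+\infty$ as $\lambda_0\to-\infty$, so the intermediate value theorem places all $m$ eigenvalues of the pencil in $(-\infty,0]$. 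So the missing idea in your plan is that Hermitian symmetry is simply unavailable here, and one must instead exploit the weaker sign/structure properties of the pentadiagonal-with-corners matrices together with a continuity argument.
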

\begin{proof}   
Clearly it suffices to examine each $i$th block $\Lbf_i=\Dbf_m^{(2)} + \Tbf_{\sin^2}^{-1}\Tbf_{\cos\sin}\Dbf_m + \Dbf^{(2)}_n(i,i)\Tbf_{\sin^2}^{-1}$. 
We first note that the eigenvalues of $\Lbf_i$ are equal to those of the matrix pencil 
\begin{equation} 
\Abf_i-\lambda \Bbf_i :=\Tbf_{\sin^2}\Dbf_m^{(2)} + \Tbf_{\cos\sin}\Dbf_m + \Dbf^{(2)}_n(i,i)\Ibf_m
-\lambda\Tbf_{\sin^2},
\label{eq:defAB}
\end{equation}
which corresponds to the generalized eigenvalue problem $\Abf_i x=\lambda \Bbf_i x$.
We shall prove that this pencil has negative real eigenvalues, regardless of $i$;  we drop the subscript $i$ for simplicity. 
Our proof proceeds as follows: 
\begin{enumerate}
\item The eigenvalues of $\Abf-\lambda\Bbf$ are the values of $\lambda_0$ for which  the matrix 
$\Abf-\lambda_0\Bbf$ has a zero eigenvalue. 
\item For any fixed $\lambda_0\in (-\infty,0]$, all the eigenvalues of the matrix $\Abf-\lambda_0\Bbf$ are real. Therefore we can define 
$m$ real continuous functions $f_j(\lambda_0):=\lambda_j(\Abf-\lambda_0\Bbf)$ 
for $j=1,\ldots,m$. 
\item For every $j$, we have $f_j(0)\leq 0$, and $f_j(\lambda_0)\geq 0$ for negative $\lambda_0$ with sufficiently large $|\lambda_0|$. 
\item By the intermediate value theorem to each $f_j(\lambda_0)$, there is at least one root $f_j(\lambda_0)=0$ 
in $\lambda_0\in(-\infty,0]$
for each $j$. It follows that $\Abf-\lambda \Bbf$ has $m$ real eigenvalues (counting multiplicities), hence so does $\Lbf_i$. 
\end{enumerate}
The only nontrivial parts are the second step, and the claim $f_j(0)\leq 0$. 

We first prove the second step, that the matrix $\Cbf(\lambda_0):=\Abf-\lambda_0\Bbf$ has only real eigenvalues.
To do this we apply the similarity transformation with the permutation matrix $\Pbf=\Ibf_m(:,[1:2:m, 2:2:m])$, 
which gives $\Pbf^T\Cbf(\lambda_0)\Pbf=\mbox{diag}(\Cbf_1(\lambda_0),\Cbf_2(\lambda_0))$, 
a block-diagonal matrix with two $\frac{m}{2}\times \frac{m}{2}:=\ell\times \ell$ blocks. 
Here $\Cbf_1(\lambda_0)$ is tridiagonal with extra elements in the upper-right and lower-left corners (as in~\eqref{eq:C1} below), 
and $\Cbf_2(\lambda_0)$ is tridiagonal.\footnote{It is possible to use this structure in the linear solvers. 
We did not do this in our experiments, as applying the permutation also requires $\mathcal{O}(nm)$ operations.}

For $\Cbf_2(\lambda_0)$, we can verify that the products of the neighboring off-diagonals are positive,
\begin{equation}
\Cbf_2(\lambda_0)_{j,j+1}\Cbf_2(\lambda_0)_{j+1,j}>0,
\end{equation} 

\noindent for all $j$ (when $\lambda_0=0$ the product can be 0; we exclude this case for the moment and assume $\lambda_0<0$).
Hence, $\Cbf_2(\lambda_0)$ is diagonally similar to a symmetric matrix, thus its eigenvalues are all real.  

It remains to prove that the eigenvalues of $\Cbf_1(\lambda_0)$ are all real.
Note that $\Cbf_1(\lambda_0)$ is of the form
\begin{equation}\label{eq:C1}
\Cbf_1(\lambda_0)
=\begin{pmatrix}
\alpha & \beta' &  & & & & & \beta' \\
\beta & \alpha_1 & \beta_1 &  \\
 & \beta_{\ell-2} & \alpha_2 & \beta_2 &  \\
&  & \beta_{\ell-3} & \ddots & \ddots &  \\
& &  & \ddots & \ddots & \ddots &  \\
& & &  & \ddots & \ddots & \beta_{\ell-1} & \\
 & & & &  & \beta_{2} & \alpha_{2} & \beta_{\ell-2} \\
\beta & & & & &  & \beta_{1} & \alpha_{1}
\end{pmatrix}. 
\end{equation}
Several properties of $\Cbf_1(\lambda_0)$ are worth noting: 
(i) the $(\ell-1)\times (\ell-1)$ submatrix obtained by removing the first row and column 
is symmetric about the antidiagonal, both in the diagonal and off-diagonal elements (note the double appearance of $\beta_i$) and
(ii) the products of the neighboring off-diagonal blocks are all positive $\beta_j\beta_{\ell-j-1}> 0$ for all $j$, and $\beta\beta'>0$. 
In Lemma~\ref{lem:realeig} below we prove that any matrix~\eqref{eq:C1} with such structure has only real eigenvalues, establishing that $f_j(\lambda_0)$ is real for any $\lambda_0<0$. The claim extends to $\lambda_0=0$ by continuity of the eigenvalues, completing the second step in the proof. 

It remains to show $f_j(0)\leq 0$ for every $j$, that is, $\Abf$ has only nonpositive eigenvalues.
Since $\Dbf_n^{(2)}(i,i)\leq 0$ for all $i$, it suffices to treat the case for which $\Dbf_n^{(2)}(i,i)=0$. 
We again examine $\Cbf_1(0)$ and $\Cbf_2(0)$ separately. 
For each of these, after deflating the zero eigenvalue (if present) we can apply a diagonal similarity transformation so that the Gershgorin disks, whose centers lie on the negative half line, do not contain the origin, implying that all the eigenvalues are nonpositive. This completes the proof of Theorem~\ref{thm:eigreal}. 
\end{proof}

It remains to prove that the eigenvalues of matrices of the form in~\eqref{eq:C1} are all real. A key fact is that a real tridiagonal matrix with the neighboring off-diagonals having the same sign is diagonally similar to a symmetric tridiagonal matrix, and an analogous result holds for  arrowhead matrices. 
\begin{lemma}\label{lem:realeig}
For any real matrix of the form~\eqref{eq:C1}, 
with  $\beta_i\beta_{\ell-i-1}> 0$ for all $i$ and 
$\beta\beta'>0$, 
all the eigenvalues are real. 
\end{lemma}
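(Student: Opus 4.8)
\emph{Proposal.} The plan is to show that a matrix of the form~\eqref{eq:C1}, which I will denote $\Cbf_1$, is diagonally similar to a real symmetric matrix; since similarity preserves eigenvalues and real symmetric matrices have real spectra, this proves the lemma. So I would look for a real invertible diagonal matrix $D=\mrm{diag}(d_1,\dots,d_\ell)$ with $D^{-1}\Cbf_1 D$ symmetric.

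First I would record the sparsity of $\Cbf_1$. Its nonzero off-diagonal entries sit exactly at $(1,2),(2,1)$, at $(j,j{+}1),(j{+}1,j)$ for $2\le j\le \ell-1$, and at the two corners $(1,\ell),(\ell,1)$; thus the undirected graph of $\Cbf_1$ is a single cycle $1\!-\!2\!-\!\cdots\!-\!\ell\!-\!1$. Conjugating by $D$ leaves the diagonal unchanged and replaces each off-diagonal pair $(C_{ij},C_{ji})$ by $\big((d_j/d_i)C_{ij},\,(d_i/d_j)C_{ji}\big)$, which are equal iff $(d_j/d_i)^2=C_{ji}/C_{ij}$. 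Hence $D^{-1}\Cbf_1 D$ is symmetric as soon as (a) $C_{ij}C_{ji}>0$ for every edge $\{i,j\}$ of the cycle, so that the required ratios are positive and the $d_i$ can be chosen real, and (b) these ratios are consistent around the cycle, i.e.\ the product of $C_{ji}/C_{ij}$ over the edges traversed once around the loop equals $1$ — equivalently, the product of the off-diagonal entries running one way around the cycle equals the product running the other way.

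Next I would verify (a) and (b) from the two hypotheses of the lemma. Condition (a) is immediate: the two corner/near-corner products equal $\beta\beta'>0$, and the products of neighbouring off-diagonals of the trailing tridiagonal block are exactly the quantities $\beta_j\beta_{\ell-j-1}>0$. For (b), the key point is the persymmetry asserted in (i): the trailing $(\ell-1)\times(\ell-1)$ block being symmetric about its antidiagonal in the off-diagonal entries means precisely that its subdiagonal is the reversal of its superdiagonal, so those two lists of $\beta$'s coincide as multisets. Combined with $C_{1,2}=C_{1,\ell}=\beta'$ and $C_{2,1}=C_{\ell,1}=\beta$, the product of the entries going one way around the cycle, $\beta'\cdot(\text{superdiagonal of the block})\cdot\beta$, equals the product going the other way, $\beta\cdot(\text{subdiagonal of the block})\cdot\beta'$. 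So (b) holds, a real positive $D$ exists, $D^{-1}\Cbf_1 D$ is real symmetric, and $\Cbf_1$ therefore has real eigenvalues.

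The explicit construction of $D$ is routine — fix $d_1=1$, set $d_2=\sqrt{\beta/\beta'}$, propagate $d_3,\dots,d_\ell$ along the path via $d_{j+1}=d_j\sqrt{C_{j+1,j}/C_{j,j+1}}$, and observe that (b) is exactly what forces the remaining corner relation $(d_\ell/d_1)^2=\beta/\beta'$ to hold automatically — so I do not expect any genuine obstacle. The only place that needs care is the index bookkeeping for the $\beta_j$ in~\eqref{eq:C1}, namely checking that the subdiagonal of the trailing block really is the reversal of its superdiagonal; this is precisely the content of property (i), and I would also note in passing that the persymmetry of the \emph{diagonal} entries of that block is not used — only the off-diagonal structure matters for realness of the spectrum.
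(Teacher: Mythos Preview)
Your proposal is correct and is in fact a cleaner argument than the paper's. The key check --- that the cycle condition holds --- goes through exactly as you say: the superdiagonal of the trailing $(\ell-1)\times(\ell-1)$ block is $(\beta_1,\beta_2,\ldots,\beta_{\ell-2})$ while its subdiagonal is the reversal $(\beta_{\ell-2},\ldots,\beta_2,\beta_1)$, so the product of off-diagonal entries around the cycle in one direction equals the product in the other direction, and together with $\beta\beta'>0$ and $\beta_i\beta_{\ell-i-1}>0$ this gives both (a) positivity of each edge product and (b) the consistency relation. Hence a real diagonal $D$ exists with $D^{-1}\Cbf_1 D$ symmetric.

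The paper takes a less direct route: it first applies a diagonal similarity only to the trailing $(\ell-1)\times(\ell-1)$ tridiagonal block to symmetrize it, then diagonalizes that symmetric block by an orthogonal $\Qbf$ to reduce $\Cbf_1$ to an arrowhead matrix, and finally uses the antidiagonal symmetry of the block (which forces eigenvectors to be either symmetric or antisymmetric) to argue that each pair of arrow entries satisfies $\widetilde\Cbf_{j,1}\widetilde\Cbf_{1,j}\geq 0$, so the arrowhead itself can be diagonally symmetrized. Your approach sidesteps the arrowhead reduction entirely by recognizing that the underlying graph is a single cycle and invoking the classical criterion for when a matrix with cycle graph is diagonally similar to a symmetric one. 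This is both shorter and more elementary; the paper's detour through the eigenvector structure of the trailing block is not needed here, though that structural observation may have been natural to the authors given how the matrix arose.
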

\begin{proof}
Denote the matrix by $\Cbf$. We can apply a diagonal similarity transformation to the bottom-right $(\ell-1)\times(\ell-1)$ part $\Cbf_2$, to obtain a symmetric matrix $\Dbf^{-1}\Cbf_2\Dbf$. 
Since $\Cbf_2$ is symmetric about the antidiagonal, so is the diagonal matrix $\Dbf$; thus 
$\Dbf^{-1}\Cbf_2\Dbf$ is both symmetric and symmetric about the antidiagonal, and 
so the transformation 
$\widehat \Cbf= \big[
\begin{smallmatrix}
1\\ & \Dbf^{-1}  
\end{smallmatrix}
\big]
\Cbf
\big[
\begin{smallmatrix}
1\\& \Dbf  
\end{smallmatrix}
\big]$
preserves the property that the off-diagonal parts of the first row and column are parallel (when one is transposed). 
Now let $\Qbf$ be an orthogonal matrix of eigenvectors of $\Dbf^{-1}\Cbf_2\Dbf$ 
such that $\Qbf^T(\Dbf^{-1}\Cbf_2\Dbf)\Qbf$ is diagonal, and consider the matrix 
$\widetilde \Cbf = \big[
\begin{smallmatrix}
1\\ & \Qbf^T
\end{smallmatrix}
\big]
\widehat \Cbf
\big[
\begin{smallmatrix}
1\\& \Qbf  
\end{smallmatrix}
\big]$. 
By the antidiagonal symmetry of $\Dbf^{-1}\Cbf_2\Dbf$, each eigenvector (column of $\Qbf$) has the property that it is either in the form 
$[v_1,v_2,\ldots,-v_2,-v_1]^T$ or $[v_1,v_2,\ldots,v_2,v_1]^T$.
Therefore, $\widetilde \Cbf$ is an arrowhead matrix with the property that 
for every $j$, we have either $\widetilde \Cbf_{j,1}=\widetilde \Cbf_{1,j}=0$, or $\widetilde \Cbf_{j,1}\widetilde \Cbf_{1,j}>0$.
It follows that there exists a diagonal similarity transformation that brings $\widetilde \Cbf$ to symmetric form, hence has real eigenvalues. 
\end{proof}

\begin{figure}
\begin{footnotesize}
\begin{verbatim}
% Parameters:
  m = 1024; n = m;                                    % number of grid points
  h = 1e-1; T = 100;                                  % time-step and final time
  u0 = @(x,y,z) cos(40*x)+cos(40*y)+cos(40*z);
  th = pi/8; c = cos(th); s = sin(th);
  u0 = 1/3*spherefun(@(x,y,z) u0(c*x-s*z,y,s*x+c*z)); % initial condition
  v0 = reshape(coeffs2(u0, m, n), m*n, 1);            % Fourier coefficients 
  
% Nonlinear operator (evaluated in value space):
  g = @(u) u - (1+1.5i)*u.*(abs(u).^2);               % N(u) = u-(1+1.5)i*u*|u|^2
  c2v = @(u) trigtech.coeffs2vals(u);                 % coeffs to values in 1D
  c2v = @(u) c2v(c2v(reshape(u,m,n)).').';            % coeffs to values in 2D
  v2c = @(u) trigtech.vals2coeffs(u);                 % values to coeffs in 1D
  v2c = @(u) reshape(v2c(v2c(u).').',m*n,1);          % values to coeffs in 2D
  N = @(u) v2c(g(c2v(u)));                            % nonlinear operator       
    
% Construct the Laplacian matrix (multiplied by Tsin2 and 1e-4): 
  Dm = spdiags(1i*[0,-m/2+1:m/2-1]', 0, m, m);                       
  D2m = spdiags(-(-m/2:m/2-1).^2', 0, m, m);
  D2n = spdiags(-(-n/2:n/2-1).^2', 0, n, n);
  Im = speye(m); In = speye(n);
  P = speye(m+1); P = P(:, 1:m); P(1,1) = .5; P(m+1,1) = .5;
  Q = speye(m+1+4); Q = Q(3:m+2,:); Q(1,3) = 1; Q(1,m+3) = 1;
  Msin2 = toeplitz([1/2, 0, -1/4, zeros(1, m+2)]);
  Msin2 = sparse(Msin2(:, 3:m+3));
  Tsin2 = round(Q*Msin2*P, 15);
  Mcossin = toeplitz([0, 0, 1i/4, zeros(1, m+2)]);
  Mcossin = sparse(Mcossin(:, 3:m+3));                     
  Tcossin = round(Q*Mcossin*P, 15);
  Lap = 1e-4*(kron(In, Tsin2*D2m + Tcossin*Dm) + kron(D2n, Im));

% Compute LU factorizations of LIRK4 matrices:
  Tsin2 = kron(In, Tsin2);
  [L, U] = lu(Tsin2); [La, Ua] = lu(Tsin2 - 1/4*h*Lap); 
  
% Time-stepping loop:
  itermax = round(T/h); v = v0;
  for iter = 1:itermax
      Nv = N(v); w = Tsin2*v;
      wa = w + h*Tsin2*1/4*Nv;
      a = Ua\(La\wa); Na = N(a);
      wb = w + h*Lap*1/2*a + h*Tsin2*(-1/4*Nv + Na);
      b = Ua\(La\wb); Nb = N(b);
      wc = w + h*Lap*(17/50*a - 1/25*b) + h*Tsin2*(-13/100*Nv + 43/75*Na + 8/75*Nb);
      c = Ua\(La\wc); Nc = N(c);
      wd = w + h*Lap*(371/1360*a - 137/2720*b + 15/544*c) ...
          + h*Tsin2*(-6/85*Nv + 42/85*Na + 179/1360*Nb - 15/272*Nc);
      d = Ua\(La\wd); Nd = N(d);
      we = w + h*Lap*(25/24*a - 49/48*b + 125/16*c - 85/12*d) ...
          + h*Tsin2*(79/24*Na - 5/8*Nb + 25/2*Nc - 85/6*Nd);
      e = Ua\(La\we); Ne = N(e);
      v = v + h*(U\(L\(Lap*(25/24*a - 49/48*b + 125/16*c - 85/12*d + 1/4*e)))) ...
          + h*(25/24*Na - 49/48*Nb + 125/16*Nc - 85/12*Nd + 1/4*Ne);
  end
  vals = c2v(v);                                      % tramsform to value space
  vals = vals([m/2+1:m 1], :);                        % restrict to [-pi,pi]x[0,pi]
  u = spherefun(real(vals)); plot(u)                  % output real(u) and plot
\end{verbatim}
\end{footnotesize}
\caption{MATLAB \textit{code to solve the Ginzburg--Landau equation on the sphere with the DFS method and the LIRK\,{\nf 4} time-stepping scheme; this code can be used for both diffusive and dispersive PDEs.}}
\label{code:LIRK4}
\end{figure}

\end{appendix}

\section*{Acknowledgements}

We thank Grady Wright for a fruitful exchange of emails about multiplication matrices and for reading an early draft of this manuscript. 
We also thank Alex Townsend and Heather Wilber for discussions about Fourier series on spheres, Jared Aurentz for various suggestions related to numerical linear algebra, and the referees for their helpful comments.
The authors are much indebted to Nick Trefethen for his continual support and encouragement.

\bibliographystyle{siam}
\bibliography{spinsphere}

\end{document}